\newcommand{\hN}{\mathcal{N}}
\newcommand{\hF}{\mathcal{F}}
\newcommand{\hP}{\mathcal{P}}
\newcommand{\hT}{\mathcal{T}}
\newtheorem{theorem}{Theorem}
\newtheorem{corollary}{Corollary}
\newtheorem{remark}{Remark}
\newtheorem{assumption}{Assumption}
\begin{document}

\title{Demand Shaping in Cellular Networks}
\author{Xinyang Zhou\ \ \ \ \ \ \ \ \ \ \ \ \ \ Lijun Chen
\thanks{X. Zhou and L. Chen are with College of Engineering and Applied Science, University of Colorado, Boulder, CO 80309, USA (emails: \{xinyang.zhou, lijun.chen\}@colorado.edu).}
\thanks{Preliminary result of this paper has been presented at the Allerton Conference on Communication, Control, and Computing, Monticello, Illinois, 2014 \cite{zhou2014demand}.}
}
\maketitle

\begin{abstract}
Demand shaping is a promising way to mitigate the wireless cellular capacity shortfall in the presence of ever-increasing wireless data demand. In this paper, we formulate demand shaping as an optimization problem that minimizes the variation in aggregate traffic. We design a distributed and randomized offline demand shaping algorithm under complete traffic information and prove its almost surely convergence. We further consider a more realistic setting where the traffic information is incomplete but the future traffic can be predicted to a certain degree of accuracy. We design an online demand shaping algorithm that updates the schedules of deferrable applications (DAs) each time when new information is available, based on solving at each timeslot an optimization problem over a shrinking horizon from the current time to the end of the day. We compare the performance of the online algorithm against the optimal offline algorithm, and provide numerical examples to complement the theoretical analysis. 
\end{abstract}

\begin{keywords}
Demand shaping, offline algorithm, online algorithm, steepest descent algorithm, supermartingale, deferrable applications, cellular networks.
\end{keywords}

\section{Introduction}\label{sec1}
We have witnessed in recent years rapid increase in demand for wireless data, driven by the proliferation of smart mobile devices. The global mobile traffic in 2016 has nearly reached 84 exabytes, more than 80 times greater than the entire global Internet traffic in 2000; yet, this number is expected to be increasing at a compound annual growth rate (CAGR) of $47\%$ in the coming five years, i.e., a seven-fold growth from 2016 to 2021 \cite{index2017global}.  However, despite frequent upgrades of cellular networks technology from 2G to 4G LTE and beyond, wireless service providers fall short of keeping up with this increasing wireless data demand, leading to congestion in the network, especially in areas of dense population. As a result, users' data rates have to be throttled to ease congestions \cite{tmobile2016,verizon2014,att2014}, at the cost of the degraded quality of service (QoS).

Admittedly, the capacity shortfall of cellular networks can be mitigated by allocating more wireless spectrum and deploying more wireless infrastructures including more and smaller cells and WiFi networks offloading, etc.  However, spectrum allocation and infrastructure upgrading are not only costly but also time-consuming, while WiFi networks may not always be available and secure. A promising alternative, inspired by the similar problem of demand response in power networks, is to improve spectrum and infrastructure efficiency through managing wireless data traffic (i.e., demand).  Notice that wireless traffic or demand usually fluctuates with a large peak-to-valley ratio throughout a day; see Fig.~\ref{fig1} for a trace of smartphone web browsing activity over a day. However, wireless capacity needs to be provisioned to meet the peak demand rather than the average. This means that the cellular network is usually stressed in peak hours while largely underutilized at other times.  If the demand profile can be shaped to reduce the peak and smooth the time variation, not only can more traffic be accommodated under limited existing capacity constraints, but also additional spectrum allocation and infrastructure upgrades can be slowed down, which together greatly improve wireless network efficiency and QoS, and yield huge savings for service providers.
\begin{figure}[t]
	\centering
  \includegraphics[trim = 0mm 20mm 0mm 40mm, clip, scale=0.5]{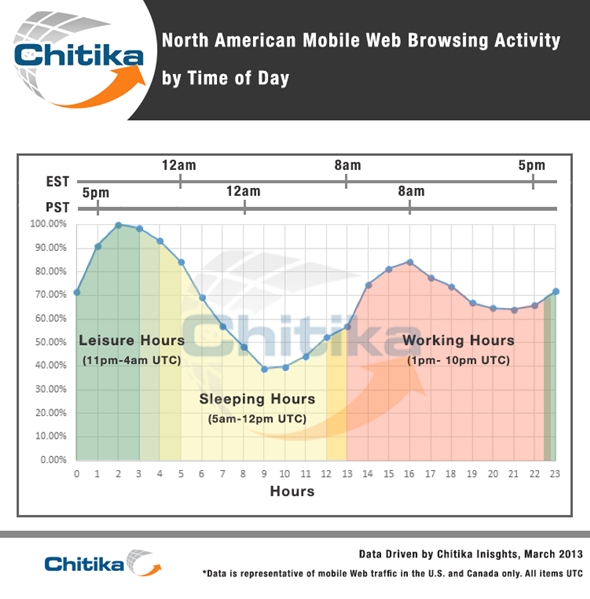}
   \caption{North America smartphone web browsing activity in one day \cite{TRAFF}.} \label{fig1}
\end{figure}

In this paper, we focus on designing demand shaping algorithms for cellular networks. We divide wireless traffic into two categories: non-deferrable traffic and deferrable traffic. Non-deferrable traffic refers to the traffic of those applications  such as  online gaming that have no or low delay tolerance, and constitutes the base traffic whose profile cannot be shaped. Deferrable traffic refers to the traffic of those applications such as file uploading/downloading that are flexible in time and only require being served by a designated deadline, e.g., finishing photo backup on cellphone by 12 am. Deferrable applications {(DAs)} are further divided into two major types: (1) continuous-rate interruptible applications such as photos backup and applications update that allow any data rates---e.g., the {\em delayed offloading} in \cite{lee2010mobile, mehmeti2014worth}, and (2) discrete-rate non-interruptible applications such as online movie streaming and video conference that usually require certain constant data rate \cite{netflixrate, skyperate} and should not be interrupted once started, e.g., one can schedule movie watching or video conference to the ``valley" time to enjoy better graphic quality and incur less data cost if he/she has the time flexibility. See Table~\ref{tab:def} for a summary of traffic types and examples. We seek to schedule the deferrable traffic to flatten the aggregate traffic profile over a day.

Specifically, we formulate the cellular traffic demand shaping as an optimization problem that minimizes the (time) variation in the aggregate traffic profile subject to the time and rate specification on each DA. We first assume complete traffic information and design an offline demand shaping algorithm. There are two challenging issues in the offline algorithm design. First, the optimization problem is non-convex because of discrete-rate non-interruptible applications. We instead solve its convex relaxation and design a randomized scheme based on the solution to the relaxed problem. Second, demand shaping involves potentially a huge number of applications and users. A centralized algorithm is not scalable. We instead design an iterative and distributed algorithm based on the descent method. We establish the almost surely convergence for the algorithm based on supermartingale theory.   

We then consider a more realistic setting with incomplete information where we can only predict future traffic to a certain degree of accuracy, and design an online and distributed demand shaping algorithm that updates the schedules of DAs each timeslot when new information  and updated prediction are available, based on the offline algorithm for an optimization problem over a shrinking horizon from the current time to the end of the day. We compare the performance of the online algorithm against the optimal offline algorithm, and provide numerical examples to complement the theoretical analysis. 

The rest of the paper is organized as follows. Section~\ref{sec:rw} briefly reviews some related work and discusses some related issues. Section~\ref{sec2} describes the system model and problem formulation. Section~\ref{sec3} presents an offline distributed algorithm for demand shaping under the assumption of complete traffic information { and characterizes its performance}. Section~\ref{sec4} considers a realistic setting of incomplete traffic information, and presents an online algorithm for demand shaping. Section~\ref{sec5} provides numerical examples to complement theoretical analysis, and Section~\ref{sec6} concludes the paper.

\begin{table}
	\begin{center}
		{
			\begin{tabular}{|| c | c ||}   
				\hline
				Traffic/Application Type & Examples\\
				\hline
				{Non-deferrable application} & Online gaming, web browsing\\
				\hline
				{Discrete-rate non-interruptible DA} & Movie streaming, video conference \\
				\hline 
				{Continuous-rate interruptible DA} & Applications update, photos backup\\
				\hline
			\end{tabular}
		}
		\caption{{Traffic/Application types and examples.}}\label{tab:def}
	\end{center}
\end{table}

\section{Related Work and Issues}\label{sec:rw}

Demand shaping in cellular networks is similar to demand response in power networks, in terms of design objectives, problem formulation, and the associated algorithmic challenges. Indeed, we borrow insights from demand response in power networks; see, e.g., \cite{LiCL11,Chen-2012-DRR, gan2012stochastic, gan2013real}.  In particular, our online demand shaping algorithm is motivated by the solution approach for online control of continuous load in reference  \cite{gan2013real}, and mathematically can be seen as its extension to incorporate discrete decision variables considered in reference \cite{gan2012stochastic}. However, our model captures realistic cellular traffic settings, as it includes both continuous and discrete decision variables. Moreover, the integration of discrete decision variables into the online algorithm makes the performance analysis of the algorithm more challenging, compared to that in \cite{gan2013real}. 
Related work also includes Zhao \emph{et al} \cite{zhao2015peak} that designs a centralized online EV charging algorithm to minimize the peak procurement from the grid under uncertain prediction of future demand and renewable energy supply, and Parise \emph{et al} \cite{parise2014mean} that  proposes a decentralized charging control for EVs to flatten the aggregate power demand profile. They all consider only continuous decision variables.

To ease the stress from high demand in cellular networks, various demand-shaping-based methodologies  as well as traffic offloading strategies have been studied in existing literatures. Tadrous \emph{et al} in \cite{tadrous2013proactive} propose a paradigm to proactively serve peak-hour
requests during the off-peak time based on prediction to smoothen the traffic demand over time without changing customers' activity pattern. However, such strategy is limited to routine behaviors only. In \cite{hajiesmaili2017incentivizing} Hajiesmaili \emph{et al} introduce an online procurement auction framework to incentivize mobile devices to participate in device-to-device load balancing to offload traffic from one heavy-loaded base station to adjacent idle ones. 
Besides, WiFi and femtocell offloading of cellular data is another major approach to easing the congestion of cellular networks; see \cite{balasubramanian2010augmenting, lee2014economics, lee2010mobile, mehmeti2014worth, iosifidis2015double, cheung2017congestion} for related works.

In this paper we have focused on designing demand shaping algorithms based on a general and simplified system model. We do not investigate the important practical issues such as the timescale and granularity at which we schedule and reschedule the DAs. We plan to develop a platform to enable automatic demand shaping in the future, and will investigate various practical issues then. Also, demand shaping involves not only the design of control algorithms but also the design of right mechanisms to incentivize the users to move out of their ``comfortable zone'' in wireless applications and data usage. Incentive design for demand shaping is currently an active research area; see, e.g., the smart data pricing in wireless networks \cite{ha2012tube, sen2013smart, zhang2016smart}, pricing design in general network service to remove congestions \cite{paschalidis2000congestion, jiang2008time}, pricing/reward signals in power distribution system \cite{zhou2017online, li2015market}, and the references therein. 

Some discussion on the practicality of demand shaping is also in place. People tend to use mobile data services whenever they want, regardless of whether it is at peak time or valley time for the cellular network. However, a survey \cite{tube2012survey} conducted in India and USA in 2012 shows that, given proper monetary incentive, many people are willing to postpone their mobile data usage, with acceptable postponement varying from minutes to hours, depending on different types of services and different individual preferences \cite{ha2012tube}. For example, wireless service providers can motivate the users to shift their demand by implementing the time-dependent pricing (TDP) strategy. TDP is now applied as a simple two-period plan by many wireless service providers around the world, in voice services and data services; e.g., Verizon \cite{verizonprice1} and Sprint \cite{sprintprice} in the US have ``happy hours'' in the night and weekend for voice service, TelCom \cite{telkomprice} in South Africa has ``Night Surfer" plans giving free data from 11pm to 5am, and Airtel \cite{airtelprice} in India provides unlimited data in the night. More refined TDP strategies can be applied to maximize benefits for both wireless service providers and users, by dynamically adjusting prices according to the data usage of the current time and predicted future. For instance, Ha {\em et al} \cite{ha2012tube} have worked on a TDP-based application named TUBE. Trials in cooperation with a local wireless service provider shows its effectiveness in shaping the traffic profile \cite{joe2011time}. Also refer to \cite{sen2012pricing} for a review of pricing strategies.

\section{System Model and Problem Formulation}\label{sec2}

Consider a cellular network that serves users for different applications such as web browsing, file sharing, real-time entertainment, etc.  The applications can be broadly divided into two categories: deferrable applications (DAs) and non-deferrable applications (non-DAs). DAs refer to those applications that are flexible in the starting time and/or data rate, while the non-DAs refer to those that should be served immediately and often have stringent data rate requirement. {Please refer to the third paragraph of Section~\ref{sec1} and TABLE~\ref{tab:def} for more detailed description and examples of DAs and non-DAs.}

This work aims to schedule the traffic of DAs so as to flatten the aggregate traffic profile over a day, subject to the time constraints and rate constraints of each application. We use a discrete-time model where one day is divided equally into $T$ timeslots, indexed by $t\in \mathcal{T}=\{1, 2, \cdots, T\}$. The duration of a timeslot can be, e.g., 30 minutes or 1 hour \cite{ha2012tube},  depending on the time resolution of scheduling decisions.  

 \begin{table}[t]
	\begin{center}
		{
			\begin{tabular}{||c|l||}   
				\hline 
				$t$ & time index, $t \in \mathcal{T} := \{1,\ldots,T\}$\\
				$n$ & DA index, $n\in\mathcal{N}:=\{1, \cdots, N\}$\\
				$\mathcal{N}'$ & set of $N'$ continuous DAs\\
				$\mathcal{N}''$ & set of $N''\!=\!N\!-\!N'$ discrete DAs\\
				$\hat{\mathcal{N}}''_t$ & set of discrete DAs started earlier\\
				$\tilde{\mathcal{N}}_t$ & set of DAs adjustable at time $t$\\
				$b$ & base traffic profile, $b=\{b(t); t\in\mathcal{T}\}$\\
				$p_n$ & data rate profile of DA $n$, $p_n=\{p_n(t); t\in\mathcal{T}\}$\\
				$\overline{p}_{n}(t)$ & upper bounds of DA $n$ on the data rate at time $t$\\
				$r_n$ & constant bit rate for DA $n\in\hN''$\\
				$l_n$ & number of timeslots to finish transmission for DA $n\in\hN''$\\
				$q$ & virtual deferrable traffic profile\\
				$d$ & average traffic profile\\
				$\hat{d}$ &average traffic profile of online ODS\\
				$\hat{d}^*$ &average traffic profile of online relaxed ODS\\
				${d}^{*}$ &average traffic profile of offline relaxed ODS\\        
				$P_n$ & total traffic required from DA $n$, $P_n=\sum_{t \in\mathcal{T}} p_n(t)$\\
				$P_n(t)$ & remaining traffic to be served for DA $n\in\hN'_t$\\
				$x^k_n$ &  change in traffic profile of DA $n$, $x^k_{n}=p_{n}^{k+1}-p_{n}^{k}$\\
				$t_n^a$ & arrival time of DA $n$\\
				$t_n^d$  & deadline of DA $n$\\ 
				$A_n$ & number of feasible profiles of DA $n\in\hN''$\\
				$f_{n,a}$ & $a$-th feasible profile of DA $n\in\hN''$\\
				$u_{n,a}$ & probability corresponding to $f_{n,a}$ \\
				$\mathcal{F}_n$ & set of all feasible traffic profiles for discrete DAs,\\
				&$\mathcal{F}_n\!=\!\{f_{n,a};1\leq a \leq A_n\}$\\
				$V(d)$ & objective value: (time) variance of d\\
				\hline 
			\end{tabular}
		}
		\caption{{Main notation.}}
	\end{center}
\end{table}

\subsection{Non-Deferrable Applications}\label{sec2A}
Non-DAs include web browsing, online gaming, and real-time chatting with multimedia, etc. The latency tolerated by these applications usually varies from hundreds of milliseconds to seconds. Since these applications should be served immediately upon request, their traffic is inelastic and constitutes the \emph{base traffic} whose profile cannot be shaped. Denote the base traffic profile by $b=\{b(t); t\in\mathcal{T}\}$. As we can only predict the base traffic to a certain accuracy, we model it as a random vector with mean $\bar{b}=\{\bar{b}(t); t \in\mathcal{T}\}$  and random derivation $\delta b=\{\delta b(t); t\in\mathcal{T}\}$ from the mean, i.e., $b=\bar{b}+\delta b$. We assume that $\delta b(t)$ has a mean of $0$ and variance of $\delta^2(t)$, and may be temporally correlated. We further assume that we can make better prediction for the timeslots that are closer to current time, modeled by a time-dependent deviation from the mean, i.e., the base traffic at some future time $\tau\in\hT$ is predicted at current time $t$ by
\begin{eqnarray}
b_t(\tau)=\bar{b}(\tau) +\delta b_t(\tau),\label{eq:ndp}
\end{eqnarray}
{where the subscript $t$ represents the timeslot when the prediction is made, and $\delta b_t(\tau)$ has a decreasing variance $\delta_t^2(\tau)$ as $t$ approaches $\tau$. More concrete model for prediction will be introduced in Section~\ref{sec5}.} The parameters $\bar{b}$ and $\delta_t$ will be specified exogenously, and can be estimated from the historical traffic records. 

\subsection{Deferrable Applications}
Assume that there are $N$ DAs in the network, indexed by $n\in\mathcal{N}=\{1, \cdots, N\}$. Each DA $n$ is characterized by an arrival time $t^a_n$ when it is requested or after which it can be started, a deadline $t^d_n$ by which its transmission must be done, and certain requirement or constraint on data rate $p_n=\{p_n(t); t\in\mathcal{T}\}$. Let $P_n$ denote the total traffic required by DA $n$, i.e., $\sum_{t \in\mathcal{T}} p_n(t)=P_n$. We can classify DAs  into two main categories: {\em continuous-rate interruptible} DAs (or continuous DAs for simplicity) that allow any data rates between certain upper and lower bounds and can be interrupted and resumed at any time before the deadline, and {\em discrete-rate non-interruptible} DAs (or discrete DAs for simplicity) that require certain (roughly) constant data rate and cannot be interrupted once they are started. For example, system backup is usually interruptible and allows any continuous data rates, while video conference is usually preferred to be non-interruptible and runs at a constant (thus discrete) data rate once it is started.

Among the total $N$ DAs, we assume there are $N'$ continuous DAs, indexed by  $n\in\mathcal{N'}=\{1, \cdots, N'\}$.  For each continuous DA, denote by $\underline{p}_{n}(t)$ and $\overline{p}_{n}(t)$ the lower and upper bounds on its data rate at time $t\in\mathcal{T}$, i.e.,
\begin{eqnarray}
\underline{p}_{n}(t) \leq p_{n}(t) \leq \overline{p}_{n}(t),\  t\in\mathcal{T}.\label{eq:rc-ci}
\end{eqnarray}
Naturally, $0\leq\underline{p}_{n}(t)\leq\overline{p}_{n}(t)$. The lower bounds $\underline{p}_{n}(t)$ are usually zero, and the upper bounds $\overline{p}_{n}(t)$ can be set according to, e.g.,  the available bandwidth. The arrival time $t_n^a$ and the deadline $t_n^d$ can be integrated into the rate constraints \eqref{eq:rc-ci} by setting $\overline{p}_{n}(t)=0$ for $t<t_n^a$ and $t > t_n^d$, i.e., no traffic is transmitted before arrival time or after deadline.

Index the rest $N''=N\!-\!N'$ discrete DAs by $n\in\mathcal{N''}=\{N'+1, \cdots, N\}$. For a discrete DA such as a streaming application, a constant bit rate $r_n$ corresponds to a certain graphic quality, { e.g., $r_n=3~\text{Mbps}$ for a SD quality movie on Netflix \cite{netflixrate}, and $r_n=1.2~\text{Mbps}$ for a HD video call on Skype \cite{skyperate}.}  As the graphic quality usually {(preferrably)} does not change during those applications, this seemingly over-simplified assumption of a single discrete rate is reasonable. 

For each DA $n\in \mathcal{N''}$ with its total traffic $P_n$ and the rate $r_n$, it takes $l_n=P_n/r_n$ consecutive timeslots (or equivalently the other way around, i.e., we calculate $P_n=l_n*r_n$ based on $l_n$ and $r_n$). Therefore, the number of its feasible traffic profiles is $A_n=t_n^d-t_n^a-l_n+1$, wherein the $a$-th feasible profile is denoted as
\begin{eqnarray}
	f_{n,a}\!=\!\Big\{p_n\Big|p_n(t)=\big\{\begin{array}{cc}
	r_n,&\!\!\!\!\text{if}~t^a_n+a-1\leq t \leq t^a_n+a+l_n\!\!\!\\
	0, & \text{otherwise}
	\end{array}\Big\}.\nonumber
\end{eqnarray} 
We denote the set of all feasible traffic profiles of DA $n\in \hN''$ by $\mathcal{F}_n=\{f_{n,a}:~1\leq a \leq A_n\}$, i.e., $p_n\in \mathcal{F}_n$, $\forall n\in\hN''$.

\begin{remark} 
All the modeled traffic parameters can be reasonably accessed or estimated in practice. For example, information regarding total required traffic $P_n$ and video streaming rate $r_n$ is available from metadata of traffic to be transmitted, parameters like $t_n^a$ and $t_n^d$ are specified by the users in advance (and $\mathcal{F}_n$ can then be calculated accordingly), whereas data rate bounds  $\underline{p}_{n}(t)$ and $\overline{p}_{n}(t)$ can be either determined by available bandwidth or designated by the users.  See, e.g., \cite{ha2012tube} for an example system involving similar information requirement and implemented with real users and service provider.\hfill$\Box$
\end{remark}

\subsection{Problem Formulation} 
We aim to schedule the traffic of DAs, so as to flatten the aggregate traffic profile as much as possible. Denote the ``average'' traffic profile by $d=\{d(t);~t \in \mathcal{T} \}:=\frac{1}{N}(b+\sum_{n\in\mathcal{N}} p_{n})$. Traffic flattening can be achieved by minimizing the time variance of $d$, formulated as the following optimal demand shaping (\textbf{ODS}) problem:

\textbf{ODS:}\vspace{-7pt}
\begin{subequations}\label{eq:ODS}
\begin{eqnarray}
\min_{p, d} && V(d)=\frac{1}{T}\sum_{t\in\mathcal{T}}\big(d(t)-\frac{1}{T}\sum_{\tau\in\mathcal{T}}d(\tau)\big)^{2}\label{eq:obj}\\ [-5pt]
\text{s.t.} && d(t) = \frac{1}{N}\big(b(t)+\sum_{n\in\mathcal{N}}p_{n}(t)\big), ~t\in\mathcal{T},\label{eq:con-ic0}\\ [-3pt]
&&\underline{p}_{n}(t)\leq p_{n}(t)\leq \overline{p}_{n}(t),~t\in\mathcal{T},~n\in\mathcal{N'}, \label{eq:con-ic1}\\
&&\sum_{t\in\mathcal{T}}p_n(t)=P_n,~n\in\mathcal{N'},\label{eq:con-ic2}\\ [-3pt]
&&p_{n} \in \mathcal{F}_n,~n\in\mathcal{N''}. \label{eq:con-nc}
\end{eqnarray}
\end{subequations}

Notice that the constraints (\ref{eq:con-nc}) for discrete DAs are non-convex.
In next section, we will investigate an {\em offline} algorithm together with a randomized scheme for solving the ODS problem under the assumption of complete information on the base traffic and DAs. Then in Section~\ref{sec4}, we will study an {\em online} algorithm for demand shaping under a more realistic setting of incomplete information where we can only predict the future traffic to a certain degree of accuracy. The offline ODS problem and algorithm will later serve as a benchmark to characterize the performance of the online algorithm.

\section{Offline Demand Shaping Algorithm}\label{sec3}
In this section, we assume complete traffic information, i.e., the base traffic and arrival of DAs are accurately known, and study how to solve the resulting {\bf offline ODS} problem. The offline problem and algorithm will provide insights into the online algorithm design for a realistic setting of incomplete information that will be considered in Section~\ref{sec4}.

\subsection{Convex Relaxation and Randomized Scheme}\label{sect:cr}

The offline ODS problem is non-convex, as each discrete DA has to pick a traffic profile from a discrete set; see constraint \eqref{eq:con-nc}.  
Consider the convex hull of $\mathcal{F}_n$, defined as
\begin{eqnarray}
 \text{conv}(\mathcal{F}_n)&:=&\Big\{{p}_{n} |~{p}_{n}=\sum_{a=1}^{A_n}u_{n,a}\cdot f_{n,a}, ~ u_{a,n}\geq 0\nonumber\\ [-6pt]
 &&~~\text{and}~\sum_{a=1}^{A_n}u_{n,a}=1\Big\},\label{eq:rf}
\end{eqnarray}
{where $u_{n}:=\{u_{n,1},\ldots,u_{n,A_n}\}$ is the convex combination coefficients, and will be interpreted as probability distribution in the randomized algorithm to be introduced soon.}
We will instead solve the convex relaxation of the ODS problem by replacing \eqref{eq:con-nc} with the following constraint:
\begin{eqnarray}
p_{n} \in \text{conv}(\mathcal{F}_n),~n\in\mathcal{N''}.\label{eq:con-ncr}
\end{eqnarray}
We call the relaxed problem (\ref{eq:obj})--(\ref{eq:con-ic2})(\ref{eq:con-ncr}) the {\bf R-ODS} problem. However, a solution $p_n^*{ \in\text{conv}(\mathcal{F}_n)},~n\in\mathcal{N''}$ to the R-ODS problem might not be feasible for original ODS, i.e., $p_n^*\notin \mathcal{F}_n$. But since by definition \eqref{eq:rf} a solution $p_{n}^*$ can always be written as the convex combination $\sum_{a=1}^{A_n}u_{n,a} f_{n,a}$ we will randomly pick a traffic profile $p_n=f_{n,a} \in \mathcal{F}_n$ with corresponding probability $u_{n,a}$.  That said, we will design a randomized  algorithm for the offline ODS problem, based on the solution  to the R-ODS problem. We will integrate it into a distributed algorithm next.

\subsection{Distributed Algorithm}\label{sect:da}
Solving the R-ODS problem (and the offline ODS problem) directly in a centralized way requires collecting information on all DAs, which may incur too much communication overhead and is impractical in the real network. Moreover, the users may not be willing to reveal information on DAs due to privacy concern. 
Therefore, we seek to solve it in a distributed way.  Noticing that R-ODS problem has decoupled constraints, we attempt to design an {\em iterative and distributed} algorithm based on the decent method \cite{Boyd}. 

Before deriving the algorithm, we establish the following useful results. At {\em k}-th iteration, let $p^k=\{p_n^k;~n\in\mathcal{N}\}$ be the traffic profiles of all DAs, 
$d^k=\frac{1}{N}(b+\sum_{n\in\mathcal{N}}p_{n}^{k})$ the average traffic profile, and $x^k_{n}=p_{n}^{k+1}-p_{n}^{k},~n\in\mathcal{N}$ the change in traffic profile of DA $n$ from iteration $k$ to $k+1$. We have:
\begin{eqnarray}
E\big[\big\|\sum_{n\in\mathcal{N}}x^k_{n}\big\|_2^2\big]=\sum_{n\in\mathcal{N}}\!Var(x^k_{n})+\big\| \sum_{n\in\mathcal{N}}E[x^k_{n}] \big\|_2^2,
\end{eqnarray}
where the variance  $Var(x_n^k):=E\big[\| x_n^k \| ^2_2 \big]\!-\!\| E[x_n^k] \| ^2_2$, and $E[\cdot]$ denotes the average.\footnote{Notice that we consider a randomized scheme only for discrete DAs. That said, for continuousDAs there is no randomness and their variance is zero.}  
By Jensen's inequality, 
\begin{eqnarray}
\| \sum_{n\in\hN}E[x^k_{n}] \|^2_2 \leq N\sum_{n\in\hN}\big\| E[x^k_n]\big\|^2_2.
\end{eqnarray}
Therefore, one has
\begin{eqnarray}
E\big[\|\sum_{n\in\mathcal{N}}x^k_{n}\|_2^2\big] \leq  \sum_{n\in\hN}Var(x^k_{n})+N\sum_{n\in\hN}\big\| E[x^k_n]\big\|^2_2.\label{eq:var}
\end{eqnarray}
And it follows that
\begin{eqnarray}
&& TN^2\big(E[V(d^{k+1})|p^k]-V(d^k)\big) \nonumber\\
&=&E\big[\|\sum_{n\in\hN}x^k_{n}\|^2_2+2\langle N d^{k},\!\sum_{n\in\hN}\!\!x^k_{n}\rangle\big]\nonumber\\ [-3pt]
&\leq& \!\!\!\!\sum_{n\in\hN}\!Var(x^k_{n})+N\!\sum_{n\in\hN}\!\| E[x^k_n]\|^2_2+2\!\sum_{n\in\hN}\!\!E[\langle N d^{k},x^k_{n}\rangle]\nonumber
\end{eqnarray}
\begin{eqnarray}
&=& \!\!\!\!\sum_{n\in\hN'}\big(2\langle N d^{k},x^k_{n}\rangle+N\| x^k_n\|^2_2\big)+\!\!\sum_{n\in\hN''}\!\!\big(2\langle N d^{k},E[x^k_{n}]\rangle\nonumber\\ [-3pt]
&&~~+N\| E[x^k_n]\|^2_2 +Var(x^k_{n})\big).\label{eq:ld}
\end{eqnarray} 
Denote by $W_1$ the first term in \eqref{eq:ld} and $W_2$ the second. For $n\in\mathcal{N'}$, we choose $p_n^{k+1}$ so as to minimize $W_1$, i.e., to solve
\vspace{-4mm}
\begin{subequations}\label{eq:algc}
\begin{eqnarray}
\min_{p_n} &&  2\langle d^{k},p_{n}-p_n^k\rangle+\| p_{n}-p_n^k\|^2_2 \label{eq:algc-ob}\\
\text{s.t.} && \eqref{eq:con-ic1}-\eqref{eq:con-ic2}. \label{eq:algc-con}
\end{eqnarray}
\end{subequations}

On the other hand, after some mathematical manipulations, we have
\vspace{-0.5mm}
\begin{eqnarray}
\nonumber W_2\!\!\!\!\! &=&\!\!\!\!\!\!\! \sum_{n\in\hN''}\!\!\big(2N \langle d^k\!-p_n^k, E[p_n^{k+1}]\rangle\! + \!(N\!-\!1)\|E[p_n^{k+1}]\|^2_2 \big)\!\! +\!\Pi^k, \label{eq:w2}
\end{eqnarray}
where $\Pi^k$ is a constant given $p_n^k$. For $n\in\mathcal{N''}$, we choose $p_n^{* k+1}$
so as to minimize $W_2$, i.e., to solve
\vspace{-0.5mm}
\begin{eqnarray}
\min_{p_n\in\text{conv}(\mathcal{F}_n)} 2 \langle d^k-p_n^k, p_n\rangle + \frac{N-1}{N}\|p_n\|^2_2.\label{eq:algd}
\end{eqnarray}

In essence, what we have done is to \emph{maximize the expected incremental decrease in the objective value $V(d)$} at each iteration (i.e., steepest descent). This motivates a distributed demand shaping algorithm with the collaboration of a coordinator; see Algorithm \ref{alg1}. The wireless service provider can implement a logical coordinator at the base station.

\begin{algorithm}
  \caption{Offline Demand Shaping  (Off-DS) Algorithm}
 \vspace{1mm} 
{At $k$-th iteration:}
\label{alg1}
\begin{enumerate}
\item Upon gathering traffic profiles $p_n^k$ from DAs, the coordinator calculates the average traffic profile $d^{k}= \frac{1}{N}(b+\sum_{n\in\hN}p^{k}_{n})$ and announces it to DAs (or the end users) over a signaling or control channel.
\item Upon receiving the average traffic profile $d^k$, 
\begin{itemize}
\item DA $n \in \mathcal{N'}$ updates its traffic profile by
\begin{eqnarray}
\nonumber p_n^{k+1} = \underset{p_n}{\arg\min} && \hspace{-5mm}\big\| p_n-p_n^k+d^k\big\|_2^2\\ [-3pt]
\nonumber \text{s.t.} && \hspace{-5mm}  \text{\eqref{eq:con-ic1}--\eqref{eq:con-ic2}},
\end{eqnarray}
and submits it to the coordinator.
\item DA $n \in \mathcal{N''}$ calculates the average traffic profile by
\begin{eqnarray}
\hspace{-2mm}\nonumber p_n^{* k+1}=\underset{p_n\in\text{conv}(\mathcal{F}_n)}{\arg\min} && \hspace{-6mm} \Big\| p_n- \frac{N}{N-1}( p_n^{k}-d^k)\Big\|_2^2,
\end{eqnarray}
which is $p_n^{* k+1}=\sum_{a=1}^{A_n}u_{n,a}^{k+1} f_{n,a}$, and then randomly chooses a traffic profile $p_n^{k+1}=f_{n,a}$ with probability $u_{n,a}^{k+1} $ and submits it to the coordinator. 
\end{itemize}
\end{enumerate}
\end{algorithm}

The Off-DS algorithm is a distributed algorithm wherein each DA solves its own simple optimization problem based on its previous decision, the average traffic profile $d^k$, and local constraints, while the coordinator collects the proposed traffic profiles and updates the average traffic profile. Therefore, this algorithm is not only preserving privacy of the users, but also scalable and thus capable of quick response, which is crucial especially in real-time implementation in Section~\ref{sec4}.

The computational complexity of the Off-DS algorithm is estimated as follows for completeness. Given certain accuracy requirement  $\epsilon>0$ in the objective function value, the descent method requires $O(\log(1/\epsilon))$ iterations \cite{Boyd}. At each iteration, DAs solves an easy quadratic programming with a polynomial complexity of $O(T^{O(1)})$ \cite{potra2000interior}. On the other hand, the coordinator calculates the average traffic profile which requires $O(N)$ complexity each iteration. As a result, the Off-DS algorithm requires overall computational complexity of $O\big((N+T^{O(1)})\log(1/\epsilon)\big)$.

\begin{remark}\label{rem:1} For simpler expression, we use $p_n$ as the decision variable for DA $n\in\hN''$ in algorithm design and analysis, while in real implementation, it is more convenient to use probability distribution $u_n$ as the equivalent decision variable. Also notice that, if there is no continuous DA, Algorithm \ref{alg1} reduces to the stochastic algorithm in \cite{gan2012stochastic}. We expect that the solution approach---randomized algorithm based on the ``steepest'' descent method for the convex relaxed problem---that we lay out in Sections~\ref{sect:cr} and \ref{sect:da} will find broad application in designing efficient algorithms for optimization problems that involve both continuous and discrete decision variables. \hfill$\Box$
\end{remark}

\subsection{Convergence}

Before showing the convergence of the Off-DS algorithm, we first establish two useful relations. For each DA $n \in \mathcal{N}^\prime$, since $p_n^{k+1}$ solves the problem (\ref{eq:algc}), we have the first-order optimality condition
\begin{eqnarray}
\langle p_n^{k+1}-p_n^k+d^k,p_n-p_n^{k+1}\rangle \geq 0 \label{optconw1}
\end{eqnarray}
for any feasible $p_n$. Set $p_n=p_n^k$ to obtain
\begin{eqnarray}
\langle d^k,p_n^{k+1}-p_n^k\rangle \leq -\| p_n^{k+1}-p_n^k\|_2^2.\label{eq:doc}
\end{eqnarray}
For each DA $n \in \mathcal{N''}$, recalling that $p_n^{*k+1}=E[p_{n}^{k+1}]$,
by the first-oder optimality condition, we have 
\begin{eqnarray}
\langle \frac{N}{N-1}( d^{k}-p_n^k)+p_n^{*k+1},p_n-p_n^{*k+1}\rangle \geq 0 \label{optconw2}
\end{eqnarray}
for any feasible $p_n$. Set $p_n=p_n^k$ to get
\begin{eqnarray}
\langle N d^{k},p_n^{*k+1}-p_n^{k}\rangle \!&\leq&\! -(N-1)\| p_n^{*k+1}-p_n^{k}\|^2_2\nonumber\\ [-1pt]
&&~~+\langle p_n^k,p_n^{*k+1}-p_n^k\rangle. \label{eq:dod}
\end{eqnarray}

Now, construct a filtration $\Sigma^*$ of the probability space $\{\Omega, \Sigma, \hP\}$, where the sample space $\Omega$ is the feasible set specified by the constraints \eqref{eq:con-ic1}--\eqref{eq:con-nc}, the $\sigma$-algebra $\Sigma_k = \Omega,~ k\geq 0$, and $\hP(\Sigma_k)=\{\delta (p_n-p_n^k), n\in\hN';~u_{n,a}^k, 1\leq a\leq A_n, n\in\hN''\}$, i.e., determined by the $k$-th iteration of the Off-DS algorithm.

\begin{theorem}\label{thm:thm1} 
The pair $(V(d),~\Sigma^*)$ is a supermartingale. \hfill $\Box$ 
\end{theorem}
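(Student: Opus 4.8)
The plan is to verify the defining inequality of a supermartingale directly, i.e. $E[V(d^{k+1})\mid\Sigma_k]\le V(d^k)$. Adaptedness and integrability come for free: the feasible set cut out by \eqref{eq:con-ic1}--\eqref{eq:con-nc} is compact, so $V\ge 0$ is bounded and $V(d^k)$ is a function of the iteration-$k$ realization. Inequality \eqref{eq:ld} already reduces the whole statement to proving $W_1\le 0$ and $W_2\le 0$, where $W_1$ collects the continuous-DA contributions and $W_2$ the discrete-DA contributions.

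For $W_1$ I would invoke the first-order optimality of the continuous update: since $p_n^{k+1}$ solves \eqref{eq:algc}, taking $p_n=p_n^k$ in \eqref{optconw1} gives \eqref{eq:doc}, namely $\langle d^k,x_n^k\rangle\le-\|x_n^k\|_2^2$. Substituting this into the $n$-th summand of $W_1$ yields $2\langle Nd^k,x_n^k\rangle+N\|x_n^k\|_2^2\le-2N\|x_n^k\|_2^2+N\|x_n^k\|_2^2=-N\|x_n^k\|_2^2\le 0$, hence $W_1\le 0$.

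The substance is $W_2$. Here the key structural fact is that all feasible profiles of a discrete DA carry the same energy, $\|f_{n,a}\|_2^2=l_nr_n^2$ for every $a$. Since $p_n^k\in\mathcal{F}_n$, this makes $\|p_n^k\|_2^2=l_nr_n^2$ deterministic given $\Sigma_k$ and lets me evaluate the variance term exactly as $Var(x_n^k)=E[\|p_n^{k+1}\|_2^2]-\|E[p_n^{k+1}]\|_2^2=l_nr_n^2-\|p_n^{*k+1}\|_2^2$. Next, because $p_n^{*k+1}$ solves \eqref{eq:algd}, taking the feasible point $p_n=p_n^k\in\text{conv}(\mathcal{F}_n)$ in \eqref{optconw2} gives \eqref{eq:dod}. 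I would then substitute \eqref{eq:dod}, the variance identity, and $\|p_n^k\|_2^2=l_nr_n^2$ into the $n$-th summand $2\langle Nd^k,E[x_n^k]\rangle+N\|E[x_n^k]\|_2^2+Var(x_n^k)$ of $W_2$; expanding the squared norms, the $l_nr_n^2$ constants cancel and the summand collapses to $-(N-1)\|p_n^{*k+1}-p_n^k\|_2^2\le 0$, so $W_2\le 0$. (Equivalently, one can start from the reduced form \eqref{eq:w2}, use that $p_n^{*k+1}$ minimizes the objective of \eqref{eq:algd} over $\text{conv}(\mathcal{F}_n)$, and evaluate that objective at the feasible point $p_n^k$; the same cancellations using $\|p_n^k\|_2^2=l_nr_n^2$ make the evaluated objective cancel $\Pi^k$ exactly.)

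The main obstacle I anticipate is the bookkeeping inside $W_2$: unlike in the continuous case, the randomized update leaves a genuine variance term in the bound, and the argument only closes because the equal-energy property of the discrete profiles is used twice --- once to pin down $Var(x_n^k)$ and once to make the $l_nr_n^2$ constants cancel --- so it is easy to drop or mis-sign one of these terms. A secondary point to get right is the measurability set-up for the (somewhat nonstandard) filtration $\Sigma^*$: one should state explicitly that conditioning on $\Sigma_k$ means conditioning on the history through iteration $k$, so that $d^k$ and $V(d^k)$ are $\Sigma_k$-measurable while the expectation in \eqref{eq:ld} averages only over the fresh randomness in $\{p_n^{k+1}\}_{n\in\mathcal{N}''}$.
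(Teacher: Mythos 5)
Your proposal is correct and follows essentially the same route as the paper's proof: bound $W_1$ via the first-order optimality condition \eqref{eq:doc}, and bound $W_2$ via \eqref{eq:dod} together with the variance bookkeeping, arriving at the same final expression $\sum_{n\in\mathcal{N}'}-N\|x_n^k\|_2^2+\sum_{n\in\mathcal{N}''}(-N+1)\|E[x_n^k]\|_2^2\le 0$. Your only addition is to state explicitly the equal-energy fact $\|f_{n,a}\|_2^2=l_nr_n^2$, which the paper uses silently in the equality step that cancels $Var(x_n^k)+2\langle p_n^k,p_n^{*k+1}-p_n^k\rangle$ against $\|E[x_n^k]\|_2^2$; making it explicit is a small but genuine improvement in rigor.
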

\begin{proof}
First, notice that $V(d)$ is bounded from below.  So, $E[-\min \{0, V(d)\}]<\infty$. Second, applying relations \eqref{eq:doc}--\eqref{eq:dod} to equation \eqref{eq:ld}, we obtain
\begin{eqnarray}
&& TN^2\big(E[V(d^{k+1})| p^k]-V(d^k)\big) \nonumber\\
&\leq& \sum_{n\in\hN'}-N\| x^k_n\|^2_2+\sum_{n\in\hN''}\big(Var(x^k_n)\nonumber\\ [-4pt]
&&~~+(-N+2)\big\| E[x^k_n]\big\|^2_2+2\langle p_n^k,p_n^{*k+1}-p_n^k\rangle\big)\nonumber\\
&=&\sum_{n\in\hN'} -N\| x^k_n\|^2_2+\sum_{n\in\hN''}(-N+1)\big\| E[x^k_n]\big\|^2_2\nonumber\\ [-1pt]
\nonumber &\leq&0\label{eq:pr0},
\end{eqnarray}
i.e., $ E[V(d^{k+1})| p^k]\leq V(d^k)$.  By definition, $(V(d),~\Sigma^*)$ is a supermartingale \cite{Grimmett}.
\end{proof}

Notice that $(V(d),~\Sigma^*)$ is a nonnegative supermartingale. By the martingale convergence theorem \cite{Grimmett}, the following result is immediate.
\begin{corollary}\label{thm:coro1}
$V(d^\infty)=\lim_{k\to\infty} V(d^k)$  exists almost surely, where $V(d^{\infty})$ is some random variable.\hfill $\Box$ 
\end{corollary}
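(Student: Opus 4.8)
The plan is to derive the corollary directly from Theorem~\ref{thm:thm1} by invoking the martingale convergence theorem. Theorem~\ref{thm:thm1} already tells us that $(V(d),\Sigma^*)$ is a supermartingale; the only extra ingredient I need is that this supermartingale is well-behaved enough for the convergence theorem to apply, namely that it is nonnegative (equivalently, bounded in $L^1$). Nonnegativity is immediate: $V(d)$ is a (time-)variance, so $V(d^k)=\frac1T\sum_{t\in\mathcal{T}}\big(d^k(t)-\frac1T\sum_{\tau\in\mathcal{T}}d^k(\tau)\big)^2\ge 0$ for every $k$ and every realization of the randomized scheme. Hence $(V(d),\Sigma^*)$ is a \emph{nonnegative} supermartingale.

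For completeness I would also record the $L^1$-bound explicitly, in either of two ways. The quick way: $V$ is continuous on the feasible set $\Omega$ cut out by \eqref{eq:con-ic1}--\eqref{eq:con-nc}, which is compact (bounded rates for the continuous DAs, finitely many profiles for the discrete DAs), so $0\le V(d^k)\le \bar V:=\max_{\Omega}V<\infty$ deterministically for all $k$, whence $\sup_k E[V(d^k)]\le\bar V<\infty$. The intrinsic way: by the tower property together with the supermartingale inequality $E[V(d^{k+1})\mid p^k]\le V(d^k)$ established in Theorem~\ref{thm:thm1}, the sequence $k\mapsto E[V(d^k)]$ is nonincreasing, so $\sup_k E[V(d^k)]=E[V(d^0)]<\infty$. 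Either route shows the process is bounded in $L^1$.

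I would then close by quoting the martingale convergence theorem \cite{Grimmett}: a nonnegative (equivalently here, $L^1$-bounded) supermartingale converges almost surely, with an integrable limit. Applying this to $(V(d^k))_{k\ge 0}$ yields an almost-surely finite random variable $V(d^\infty)$ with $V(d^k)\to V(d^\infty)$ a.s., which is precisely the assertion of the corollary. I do not expect any genuine obstacle: all the substantive work — the one-step expected-descent bound $E[V(d^{k+1})\mid p^k]\le V(d^k)$, obtained from the optimality conditions \eqref{eq:doc}, \eqref{eq:dod} and the variance bookkeeping behind \eqref{eq:w2} — is already contained in Theorem~\ref{thm:thm1}, and the corollary merely pairs that with the nonnegativity of a variance and the correct convergence statement. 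The one point to be slightly careful about is to invoke the right hypothesis (a nonnegative / $L^1$-bounded supermartingale, not a martingale), so that what is guaranteed is almost-sure convergence to a finite limit rather than only convergence in some weaker sense.
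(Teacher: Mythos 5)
Your proposal is correct and follows essentially the same route as the paper: the paper simply observes that $(V(d),\Sigma^*)$ is a \emph{nonnegative} supermartingale (since $V$ is a variance) and invokes the martingale convergence theorem from \cite{Grimmett} to conclude almost-sure convergence. Your additional remarks on the $L^1$-bound are fine but not needed beyond nonnegativity.
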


\begin{theorem}\label{thm:thm2}
Denote by $\hP^\infty$ an ``equilibrium'' distribution over traffic profiles that $(V(d),~\Sigma^*)$ converges to. The support of $\hP^\infty$ is a singleton. \hfill $\Box$ 
\end{theorem}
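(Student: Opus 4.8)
\smallskip
\noindent\textbf{Proof plan.}\ The plan is to show that the randomness built into the Off-DS algorithm is asymptotically extinguished: for every discrete DA the conditional probability of re-drawing a \emph{different} traffic profile vanishes, and then the limiting distribution over profiles collapses to a single atom.

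First I would revisit the proof of Theorem~\ref{thm:thm1}: that chain of estimates in fact yields the one-step drift bound
\begin{eqnarray}
TN^2\big(V(d^k)-E[V(d^{k+1})|p^k]\big) & \geq & N\!\sum_{n\in\hN'}\!\|x^k_n\|_2^2\nonumber\\
& & +\,(N-1)\!\sum_{n\in\hN''}\!\big\|p_n^{*k+1}-p_n^k\big\|_2^2.\nonumber
\end{eqnarray}
Taking expectations and summing over $k$, the right-hand side telescopes against $TN^2\,E[V(d^0)]<\infty$, so almost surely $\sum_k\|x^k_n\|_2^2<\infty$ for every $n\in\hN'$ and $\sum_k\|p_n^{*k+1}-p_n^k\|_2^2<\infty$ for every $n\in\hN''$; in particular $\|p_n^{*k+1}-p_n^k\|_2\to 0$ a.s.\ for each discrete DA. I would then bring in the geometry of the finite polytope $\mathrm{conv}(\mathcal{F}_n)$: each $f_{n,a}$ is an extreme point, so there is a constant $\epsilon_n>0$ with $\mathrm{dist}\big(f_{n,a},\,\mathrm{conv}(\mathcal{F}_n\setminus\{f_{n,a}\})\big)\geq\epsilon_n$ for all $a$. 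Writing $p_n^{*k+1}=\sum_a u^{k+1}_{n,a}f_{n,a}$ and letting $a_k$ be the index with $p_n^k=f_{n,a_k}$, a short computation gives $\|p_n^{*k+1}-p_n^k\|_2\geq\epsilon_n\big(1-u^{k+1}_{n,a_k}\big)$; hence the conditional probability $\pi_n^k:=1-u^{k+1}_{n,a_k}$ that DA $n$ changes its profile at step $k$ satisfies $\pi_n^k\to 0$ a.s.

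It then remains to conclude that $\hP^\infty$ — which the statement introduces as the distribution the iteration converges to, i.e.\ the limit of the per-iteration distributions $\hP(\Sigma_k)$ — is a single atom. Granting that the $\hP(\Sigma_k)$ do converge, this is immediate: for each discrete DA, $u^k_n$ places mass $1-\pi_n^{k-1}$ on the single vertex $a_{k-1}$, so $\max_a u^k_{n,a}\to 1$; since $\sum_a u^k_{n,a}=1$ on the finite index set, every limit of $u^k_n$ must be a point mass, and as $\hP(\Sigma_k)$ is the product of point masses (for the continuous DAs) with the $u^k_n$ (for the discrete ones), $\hP^\infty$ is supported at one profile. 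The delicate point is precisely the standing hypothesis that $\hP(\Sigma_k)$ converges: proving it from scratch amounts to showing that each discrete configuration changes only finitely often almost surely, and a (conditional) Borel--Cantelli argument for this needs $\sum_k\pi_n^k<\infty$ a.s., whereas the drift bound only gives $\sum_k(\pi_n^k)^2<\infty$ a.s. I would try to close that gap by accounting for the \emph{realized} jump rather than the mean displacement: for a discrete DA all feasible profiles have equal norm, so $E[\|x^k_n\|_2^2|p^k]=-2\langle p_n^k,\,p_n^{*k+1}-p_n^k\rangle$, a quantity that lies between $\gamma_n^2\pi_n^k$ (with $\gamma_n:=\min_{a\neq b}\|f_{n,a}-f_{n,b}\|_2>0$) and $C_n\pi_n^k$ for a constant $C_n$; since $\sum_k\|p_n^{*k+1}-p_n^k\|_2^2<\infty$ a.s., the task reduces to $\sum_k Var(x^k_n)<\infty$ a.s., which I would aim to extract by going beyond the in-expectation drift and using the almost sure convergence of $V(d^k)$ itself (Corollary~\ref{thm:coro1}) — e.g.\ via the Doob decomposition of the bounded supermartingale $V(d^k)$, whose compensator is a.s.\ finite and whose martingale part has controllable fluctuations. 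Making this last accounting rigorous is where I expect the main obstacle to lie.
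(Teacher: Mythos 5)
Your proposal is correct for the theorem as stated, but it reaches the conclusion by a genuinely different route than the paper. The paper argues qualitatively: once $(V(d),\Sigma^*)$ has converged, the supermartingale inequality must hold with equality, which forces $p_n^{*k+1}=p_n^k$ for every $n\in\hN''$; since the feasible profiles $f_{n,a}$ of a discrete DA are linearly independent, $\sum_a u_{n,a}^{k+1}f_{n,a}=f_{n,a_k}$ can only hold with $u^{k+1}_{n,a_k}=1$, so the distribution is a point mass. You instead make the drift quantitative --- telescoping $TN^2\bigl(V(d^k)-E[V(d^{k+1})\,|\,p^k]\bigr)\geq N\sum_{n\in\hN'}\|x_n^k\|_2^2+(N-1)\sum_{n\in\hN''}\|p_n^{*k+1}-p_n^k\|_2^2$ to get almost-sure summability, hence $\|p_n^{*k+1}-p_n^k\|_2\to 0$ --- and then use the extreme-point separation $\mathrm{dist}\bigl(f_{n,a},\text{conv}(\mathcal{F}_n\setminus\{f_{n,a}\})\bigr)\geq\epsilon_n>0$ (which follows from the same linear independence) to conclude $1-u^{k+1}_{n,a_k}\to 0$. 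Both arguments rest on the same two pillars (the strict-descent term for discrete DAs and the affine independence of the $f_{n,a}$); yours buys a rate-type statement along the whole trajectory rather than only a characterization of the fixed point, at the cost of more machinery.

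One clarification on the ``delicate point'' you flag. The obstacle you describe --- that $\sum_k(\pi_n^k)^2<\infty$ a.s.\ does not give $\sum_k\pi_n^k<\infty$ a.s., so Borel--Cantelli cannot be invoked to show the discrete profiles change only finitely often --- is a real issue, but it concerns proving that the per-iteration distributions $\hP(\Sigma_k)$ \emph{converge at all}, i.e.\ the substance of Theorem~\ref{thm:thm3} rather than of Theorem~\ref{thm:thm2}. The statement of Theorem~\ref{thm:thm2} already posits $\hP^\infty$ as a distribution the process converges to and asks only that its support be a singleton; your ``granting that the $\hP(\Sigma_k)$ do converge'' is therefore exactly the logical position the paper's own proof occupies, and under that hypothesis your argument is complete. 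You do not need to close the $\sum_k\pi_n^k$ gap to prove this theorem; you have, however, correctly put your finger on the step that the paper's passage from Theorem~\ref{thm:thm2} and Corollary~\ref{thm:coro1} to Theorem~\ref{thm:thm3} leaves implicit.
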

\begin{proof}
When $(V(d),~\Sigma^*)$ converges, $ E[V(d^{k+1})| p^k] = V(d^k)$. This requires $E[x^k_n]=E[x^k_{n'}], ~n, n' \in\hN$,  $p_n^{k+1}=p_n^k,~n\in\hN'$, and $p_n^{* k+1}=p_n^k,~n\in\hN''$ for \eqref{eq:var},  \eqref{eq:doc}, and  \eqref{eq:dod} to hold with equality. Notice that $p_n^{* k+1}=p_n^k$ implies $p_n^{ k+1}=p_n^k$, as different feasible traffic profiles of DA $n\in\hN''$ are linearly independent. Thus, $p_n^{k+1}=p_n^k,~n\in\hN$. So, the support of $\hP^\infty$ contains only one point.
\end{proof}

Denote by $p^\infty$ an ``equilibrium'' traffic profile of the Off-DS algorithm, i.e., if $p^k=p^\infty$, then $p^{k+1}=p^\infty$. Obviously the set of equilibrium profiles is not empty, as an optimum of the offline ODS problem is an equilibrium. The following result follows immediately from Theorem \ref{thm:thm2} and Corollary \ref{thm:coro1}.
\begin{theorem}\label{thm:thm3}
The Off-DS algorithm converges almost surely to an equilibrium traffic profile. \hfill $\Box$ 
\end{theorem}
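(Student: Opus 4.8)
The skeleton is short: by Corollary~\ref{thm:coro1} the value $V(d^{k})$ converges almost surely, and by Theorem~\ref{thm:thm2} the distribution over traffic profiles to which the process settles is concentrated at a single point; combining these, $p^{k}$ must converge almost surely to that (random) profile $p^{\infty}$, which is an equilibrium because an optimum of the offline ODS problem is one. The plan below is about filling this in rigorously.

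First I would make the per-step decrease explicit. Writing $a_{k}:=V(d^{k})-E[V(d^{k+1})\mid p^{k}]\ge 0$, the sequence $E[V(d^{k})]$ is nonincreasing and bounded below by $0$, so $\sum_{k}E[a_{k}]=E[V(d^{0})]-\lim_{k}E[V(d^{k})]<\infty$ and hence $\sum_{k}a_{k}<\infty$ almost surely; in particular $a_{k}\to 0$. Substituting this into the chain of inequalities from the proof of Theorem~\ref{thm:thm1}, which gives $TN^{2}a_{k}\ge N\sum_{n\in\hN'}\|x_{n}^{k}\|_{2}^{2}+(N-1)\sum_{n\in\hN''}\|E[x_{n}^{k}]\|_{2}^{2}$, I obtain $\|x_{n}^{k}\|_{2}\to 0$ for every continuous DA and $\|p_{n}^{*k+1}-p_{n}^{k}\|_{2}=\|E[x_{n}^{k}]\|_{2}\to 0$ for every discrete DA. For the discrete DAs I would then translate this into a vanishing ``switching probability'': since $p_{n}^{k}\in\mathcal{F}_{n}$ and the profiles in $\mathcal{F}_{n}$ are linearly independent (the fact used in the proof of Theorem~\ref{thm:thm2}), writing $p_{n}^{*k+1}=\sum_{a=1}^{A_{n}}u_{n,a}^{k+1}f_{n,a}$ and letting $a^{*}$ be the index with $f_{n,a^{*}}=p_{n}^{k}$, one has $\|p_{n}^{*k+1}-p_{n}^{k}\|_{2}\ge\gamma_{n}\,(1-u_{n,a^{*}}^{k+1})$ for a constant $\gamma_{n}>0$ determined by $\mathcal{F}_{n}$, so $\Pr(p_{n}^{k+1}\neq p_{n}^{k}\mid p^{k})=1-u_{n,a^{*}}^{k+1}\to 0$ almost surely. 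At this point the iteration has entered, in the limit, precisely the regime analyzed in the proof of Theorem~\ref{thm:thm2}, where the update map must return the current profile; together with that theorem this identifies $p^{\infty}=\lim_{k}p^{k}$, and checking that $p^{\infty}$ is an equilibrium is then immediate: the first-order conditions \eqref{optconw1} and \eqref{optconw2} hold with equality at $p^{\infty}$ so the continuous updates reproduce $p_{n}^{\infty}$, and each discrete mixing distribution is degenerate at $p_{n}^{\infty}$ so its randomized update reproduces $p_{n}^{\infty}$ almost surely; non-emptiness of the equilibrium set was already observed.

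The delicate point — the part I expect to require real work — is going from ``the switching probability tends to $0$'' to ``each finite-valued $p_{n}^{k}$, $n\in\hN''$, is eventually constant almost surely,'' and, in the continuous block, from ``$\|x_{n}^{k}\|_{2}\to 0$'' to ``$p_{n}^{k}$ is Cauchy.'' The supermartingale only delivers $\sum_{k}(1-u_{n,a^{*}}^{k+1})^{2}<\infty$, i.e. square-summability, which is not enough to invoke a conditional Borel--Cantelli argument directly. To close this I would use the convergence of $V(d^{k})$ itself rather than merely of its increments: once the continuous profiles have stabilized, any switch of a discrete DA changes $d^{k}$ by an amount bounded below by a positive constant (again by linear independence of $\mathcal{F}_{n}$), so infinitely many such switches are incompatible with $V(d^{k})$ converging, the only caveat being the event that several discrete DAs switch at the same iteration so that their changes cancel in $d^{k}$, which I would dispose of by a separate short argument. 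For the continuous block I would combine $\sum_{k}\|x_{n}^{k}\|_{2}^{2}<\infty$ with the optimality inequality \eqref{optconw1} in a quasi-Fej\'er / telescoping estimate to show the iterates form a Cauchy sequence, so that $p^{\infty}$ genuinely exists. Everything else sits on top of Theorems~\ref{thm:thm1}--\ref{thm:thm2} and Corollary~\ref{thm:coro1} and is routine.
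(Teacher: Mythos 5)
Your skeleton is exactly the paper's: the published proof of this theorem is the single sentence that the result ``follows immediately from Theorem~\ref{thm:thm2} and Corollary~\ref{thm:coro1},'' so at the level of approach you and the authors coincide. The difference is that you tried to make ``immediately'' precise, and in doing so you have correctly located a genuine gap that the paper does not address: almost sure convergence of the scalar $V(d^k)$, together with the statement that a limiting distribution has singleton support, does not by itself yield almost sure convergence of the profile sequence $p^k$. Your diagnosis of why is accurate on both counts --- the supermartingale decrement only gives $\sum_k\|x_n^k\|_2^2<\infty$ for $n\in\hN'$ and $\sum_k\|E[x_n^k\mid p^k]\|_2^2<\infty$ for $n\in\hN''$, i.e.\ square-summability of the switching probabilities, which is too weak for a conditional Borel--Cantelli argument, and square-summable increments do not make the continuous iterates Cauchy. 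The quasi-Fej\'er argument you sketch from \eqref{optconw1} is the standard and correct way to close the continuous block.

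One caution on your proposed repair for the discrete block: the claim that infinitely many switches of size bounded below are ``incompatible with $V(d^k)$ converging'' is not true as stated, because the level sets of the variance $V$ are not singletons --- $d^k$ can jump infinitely often between distinct profiles of equal variance while $V(d^k)$ converges. So the obstruction is not only the simultaneous-cancellation event you flag; even a single discrete DA switching along a level set of $V$ must be excluded, presumably by exploiting the strict drop in the \emph{conditional expectation} $E[V(d^{k+1})\mid p^k]$ whenever $p_n^{*k+1}\neq p_n^k$ (quantified via the linear-independence constant $\gamma_n$) rather than the realized change in $V$. In short: same route as the paper, but your proposal is the more honest one --- the theorem requires the additional work you describe (with the discrete-block patch repaired), and the paper supplies none of it.
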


By equations (\ref{optconw1})--(\ref{optconw2}), we have the following optimality conditions at equilibrium $p^{\infty}$: for any feasible $p_n$, 
\begin{subequations}\label{eq:optconw34}
\begin{eqnarray}
\langle b+\sum_{m\in\hN}p^{\infty}_m, p_n-p_n^{\infty}\rangle \geq 0,\ n\in\hN',\label{optconw3}\\ [-3pt]
\langle b+\sum_{m\neq n}p^{\infty}_m, p_n-p_n^{\infty}\rangle \geq 0,\ n\in\hN''.\label{optconw4}
\end{eqnarray}
\end{subequations}

\subsection{Performance Analysis of the Offline Algorithm}
We now characterize the performance of Off-DS algorithm with respect to the relaxed problem R-ODS that at optimum may attain a lower objective value than the ODS problem. Specifically, denote by $p^*$ the solution of R-ODS, we bound the gap between the equilibrium of the Off-DS algorithm and the solution of the R-ODS problem as:  $G^{\text{off}}:=V(d^{\infty})-V(d^*)$, where $d^{\infty} = (b+\sum_{n\in\mathcal{N}}p^{\infty}_{n})/N$ and $d^{*} = (b+\sum_{n\in\mathcal{N}}p^{*}_{n})/N$. 
Denote by $G_r^{\text{off}}:=(V(d^{\infty})-V(d^*))/V(d^*)$ the relative gap achieved by the Off-DS algorithm.

\begin{theorem}\label{the:peroffline}
For the Off-DS algorithm, the gap $G^{\text{off}}$ is bounded as follows: 
\begin{eqnarray}
G^{\text{off}}\leq \frac{2}{TN^2}\sum_{n\in\hN''}\| p_n^{\infty}\|_2^2.
\end{eqnarray}
Moreover, the relative gap diminishes as the number $N''$ of discrete DAs increases, i.e.,
\begin{eqnarray}
\lim_{N''\rightarrow\infty} G_r^{\text{off}}=0.\label{eq:dimishsub}
\end{eqnarray}
 \hfill $\Box$ 
\end{theorem}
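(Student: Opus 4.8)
The plan is to bound $G^{\text{off}}$ by comparing the Off-DS equilibrium $p^{\infty}$ against the R-ODS optimum $p^{*}$ through the equilibrium optimality conditions \eqref{eq:optconw34}, and then to estimate $\|p_n^{\infty}\|_2^2$ for the discrete DAs in order to pass to the limit. Throughout I would write $s:=b+\sum_{n\in\hN}p_n$, so that $d=s/N$ and $TN^2 V(d)=\sum_{t\in\hT}\bigl(s(t)-\tfrac{1}{T}\sum_{\tau\in\hT}s(\tau)\bigr)^2$. The one structural fact I rely on is conservation of total traffic: every feasible $p_n$ (an ODS solution, an R-ODS solution, or the equilibrium) delivers exactly $P_n$, so $\sum_t p_n^{\infty}(t)=\sum_t p_n^{*}(t)=P_n$; hence $s^{\infty}$ and $s^{*}$ share the same coordinate sum $S:=\sum_t b(t)+\sum_n P_n$, and each difference $p_n^{\infty}-p_n^{*}$ has zero coordinate sum.

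First I would expand the quadratic. Applying $\|u\|_2^2-\|v\|_2^2 = 2\langle u,u-v\rangle-\|u-v\|_2^2 \le 2\langle u,u-v\rangle$ to the centered vectors $u=s^{\infty}-\tfrac{S}{T}\mathbf{1}$ and $v=s^{*}-\tfrac{S}{T}\mathbf{1}$ gives $TN^2 G^{\text{off}}\le 2\bigl\langle s^{\infty}-\tfrac{S}{T}\mathbf{1},\, s^{\infty}-s^{*}\bigr\rangle$. Since $s^{\infty}-s^{*}=\sum_{n\in\hN}(p_n^{\infty}-p_n^{*})$ and each summand has zero coordinate sum, the $\tfrac{S}{T}\mathbf{1}$ piece drops out, leaving
\[
TN^2 G^{\text{off}} \ \le\ 2\sum_{n\in\hN}\bigl\langle Nd^{\infty},\, p_n^{\infty}-p_n^{*}\bigr\rangle .
\]
Then I would invoke the equilibrium conditions. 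For $n\in\hN'$, $p_n^{*}$ is feasible in \eqref{optconw3} (it satisfies \eqref{eq:con-ic1}--\eqref{eq:con-ic2}), so $\langle Nd^{\infty},p_n^{\infty}-p_n^{*}\rangle\le 0$ and those terms are discarded. For $n\in\hN''$, $p_n^{*}\in\text{conv}(\mathcal{F}_n)$ is feasible in \eqref{optconw4}, i.e.\ $\langle Nd^{\infty}-p_n^{\infty},\,p_n^{*}-p_n^{\infty}\rangle\ge 0$; rearranging, $\langle Nd^{\infty},p_n^{\infty}-p_n^{*}\rangle \le \langle p_n^{\infty},p_n^{\infty}-p_n^{*}\rangle = \|p_n^{\infty}\|_2^2-\langle p_n^{\infty},p_n^{*}\rangle \le \|p_n^{\infty}\|_2^2$, the last inequality because all data rates are nonnegative so $\langle p_n^{\infty},p_n^{*}\rangle\ge 0$. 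Summing over $n\in\hN''$ yields $G^{\text{off}}\le \frac{2}{TN^2}\sum_{n\in\hN''}\|p_n^{\infty}\|_2^2$.

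For the limit \eqref{eq:dimishsub}, I would use that at an equilibrium $p_n^{\infty}\in\mathcal{F}_n$ for every $n\in\hN''$ by Theorem~\ref{thm:thm2}, so $p_n^{\infty}$ equals $r_n$ on $l_n$ consecutive timeslots and $0$ elsewhere, giving $\|p_n^{\infty}\|_2^2=l_n r_n^2=P_n r_n$. Under the natural assumption that the per-DA parameters are uniformly bounded (say $P_n\le\bar P$, $r_n\le\bar r$), $\sum_{n\in\hN''}\|p_n^{\infty}\|_2^2\le \bar P\bar r\,N''$; since $N\ge N''$ this gives $G^{\text{off}}\le \frac{2\bar P\bar r}{T\,N''}\to 0$. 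Dividing by $V(d^{*})$ and using that $V(d^{*})$ stays bounded away from $0$ as $N''\to\infty$, $G_r^{\text{off}}\le \frac{2\bar P\bar r}{T\,V(d^{*})\,N''}\to 0$.

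The inequality chain for $G^{\text{off}}$ is mostly bookkeeping once the optimality conditions \eqref{eq:optconw34} are in hand; the genuinely delicate point is the relative-gap limit. The bound $G^{\text{off}}=O(1/N'')$ controls only the \emph{absolute} gap, and converting it into a statement about the \emph{relative} gap requires $V(d^{*})$ not to collapse as deferrable applications are added --- a modeling assumption about how the instance family scales. I would therefore state it explicitly: $\liminf_{N''\to\infty}V(d^{*})>0$, which holds precisely when the base traffic's intrinsic time variation cannot be fully absorbed by the deferrable load. I would likewise make explicit that uniform bounds on $P_n$ and $r_n$ (or more weakly $\sum_{n\in\hN''}\|p_n^{\infty}\|_2^2=o(N^2)$) are what render the numerator negligible.
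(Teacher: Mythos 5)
Your bound on $G^{\text{off}}$ is, step for step, the paper's own proof: reduce $V(d^{\infty})-V(d^{*})$ to $\frac{1}{T}(\|d^{\infty}\|_2^2-\|d^{*}\|_2^2)$ using conservation of total traffic, apply $\|u\|^2-\|v\|^2\le 2\langle u,u-v\rangle$, split the resulting sum over $\hN'$ and $\hN''$, and kill/bound the terms via the equilibrium optimality conditions \eqref{optconw3}--\eqref{optconw4} with $p_n=p_n^{*}$, ending at $\frac{2}{TN^2}\sum_{n\in\hN''}\|p_n^{\infty}\|_2^2$; this part is correct and essentially identical. For the limit \eqref{eq:dimishsub}, the paper simply asserts that the numerator of \eqref{eq:peroff} grows linearly in $N''$ while the denominator (which equals $TN^2V(d^{*})$) grows quadratically, which tacitly presumes exactly what you state explicitly: uniformly bounded per-DA traffic (so $\sum_{n\in\hN''}\|p_n^{\infty}\|_2^2=O(N'')$, which your computation $\|p_n^{\infty}\|_2^2=P_nr_n$ makes concrete) and $\liminf_{N''\to\infty}V(d^{*})>0$ (otherwise the quadratic growth of the denominator fails, e.g.\ when the deferrable load can perfectly flatten the profile and $V(d^{*})\to 0$). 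Your version is the more honest statement of the same scaling argument; flagging the non-degeneracy of $V(d^{*})$ as an explicit hypothesis is a genuine improvement over the paper's one-line justification, not a deviation from its method.
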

\begin{proof}
For notational simplicity, let $c_d:=\sum_{t\in\mathcal{T}}d(t)/T$, which is a constant given the total amount of traffic. The objective value can be written as 
\begin{eqnarray}
V(d)&=&\frac{1}{T}\|d-c_d\cdot \bm{1}\|^2_2=\frac{1}{T}(\|d\|_2^2+c_d^2\|\bm{1}\|_2^2-2\langle d,\bm{1} \rangle)\nonumber\\
&=&\frac{1}{T}(\|d\|_2^2+T\cdot c_d^2-2T\cdot c_d),\nonumber
\end{eqnarray}
where only the part $\|d\|_2^2$ contains decision variables. We can thus write the gap $G^{\text{off}}$ as
\begin{eqnarray}
&&\hspace{-.3cm}G^{\text{off}}=V(d^{\infty})-V(d^*)= \frac{1}{T}\big(\|d^{\infty}\|_2^2-\|d^{*}\|_2^2\big)\nonumber\\ [-6pt]
&=& \frac{1}{T}\big(-\|d^{\infty}-d^*\|^2_2+\langle 2d^{\infty}, d^{\infty}-d^* \rangle\big)\nonumber\\[-1pt]
&\leq& \frac{1}{T}\langle 2d^{\infty}, d^{\infty}-d^* \rangle\nonumber\\ [-3pt]
&=&\frac{2}{N^2}\Big(\!\!\sum_{n\in\hN'}\!\langle Nd^{\infty}, p_n^{\infty}-p_n^*\rangle+\!\!\sum_{n\in\hN''}\!\langle Nd^{\infty}, p_n^{\infty}-p_n^*\rangle\Big)\nonumber\\ [-4pt]
&\leq& \frac{2}{TN^2}\sum_{n\in\hN''}\langle p_n^{\infty}, p_n^{\infty}-p_n^*\rangle\nonumber\\ [-4pt]
&\leq& \frac{2}{TN^2}\sum_{n\in\hN''}\| p_n^{\infty}\|_2^2,\nonumber
\end{eqnarray}
where the second inequality follows from (\ref{eq:optconw34}). Note that $\| p_n^{\infty}\|_2^2$ is a constant for $n\in\hN''$.
Then the relative gap $G_r^{\text{off}}$ can be bounded as
\begin{eqnarray}
G_r^{\text{off}}&\leq&  \frac{2}{TN^2}\sum_{n\in\hN''}\| p_n^{\infty}\|_2^2/V(d^*)\nonumber\\ [-4pt]
&=&\frac{\sum_{n\in\hN''}\| p_n^{\infty}\|_2^2}{\|b+\sum_{n\in\hN}p^*_n\|^2_2+N^2(T\cdot c_d^2-2T\cdot c_d)},
\label{eq:peroff}
\end{eqnarray}
whose numerator increases linearly with $N''$ and denominator increases linearly with the square of $N''$. Equation (\ref{eq:dimishsub}) follows.
\end{proof}

\begin{remark}
We use the relaxed problem R-ODS for comparison instead of the ODS problem for two reasons. First, it is difficult to characterize the optimum of the non-convex ODS problem, and thus evaluating the gap between the equilibrium of the Off-DS algorithm and the optimum of ODS problem is mathematically hard. Second, R-ODS achieves an optimal objective value that is not greater than ODS, resulted from convex relaxation for the discrete decision variables. Therefore, $G^{\text{off}}$ provides an upper bound for the ``actual'' sub-optimality, i.e., the gap between the equilibrium of Off-DS and the optimum of ODS.\hfill$\Box$
\end{remark}

\section{Online Demand Shaping Algorithm}\label{sec4}

In this section, we consider a realistic setting with incomplete information where we can only predict future traffic to a certain degree of accuracy, and study online demand shaping that makes decisions based on the prediction of future traffic and updates the decision as new information becomes available.

A typical algorithm used in this setting is the receding horizon control; see, e.g., \cite{RHC}. However, as the objective function \eqref{eq:obj} does not have a nice additive structure, receding horizon control algorithm does not admit an easy analysis. We will instead extend a shrinking horizon control algorithm, which is used in \cite{gan2013real} that studies mathematically the same problem with only continuous DAs, to include discrete DAs, and apply it to our online demand shaping ({\bf online DS}) problem.

\subsection{Online Algorithm}\label{sec4A}

We assume that the number $m_t$ of DAs arriving at time $t$ is randomly distributed with a mean $\lambda_t$ and variance $(\delta\lambda_t)^2$, and the total amount of traffic of each DA is randomly distributed with a mean $P$ and variance $(\delta P)^2$. 
Denote by $\mathcal{N}_t'=\{1, \cdots, N_t'\}$  the set of continuous DAs and $\mathcal{N}_t ''=\{N'+1, \cdots, N_t\}$
the set of discrete DAs that have arrived by time $t\in\mathcal{T}$, and let $\mathcal{N}_t=\mathcal{N}_t '\cup\mathcal{N}_t''$  and $N''_t=N_t-N'_t$. Notice that we cannot reschedule the remaining traffic of a discrete DA that has already started. Denote by $\tilde{\hN}_t''\subseteq \hN_t''$ the set of discrete DAs that have not been started by time $t$. For DA $n\in\tilde{\hN}_t''$, denote by $\hF_n(t)=\{f_{n,a};1\leq a \leq A_n(t)\}$ the set of feasible traffic profiles at time $t$. Let $\tilde{\mathcal{N}}_t=\mathcal{N}_t '\cup\tilde{\mathcal{N}}_t''$ be the set of DAs whose profiles are still adjustable at time $t$ (i.e., all the continuous DAs and the discrete DAs that have not started by  time $t$).

At time $t$, we make a prediction $b_t(t:T)$ of base 
traffic for the rest timeslots of the day, and we also have the information
on DA $n\in \hN_t$ and the expected total future deferrable
traffic $\sum_{\tau=t+1}^T P\lambda_{\tau}$. Following \cite{gan2013real}, we introduce a {\em virtual} deferrable traffic profile $q(t: T)=\{q(\tau); t\leq \tau \leq T\}$ with $q(t)=0$ and $\sum_{\tau=t}^T q(\tau) = \sum_{\tau=t+1}^T P\lambda_{\tau}$,  to emulate the impact of the future deferrable traffic upon the current demand shaping decision.  With the afore setup,  we aim to schedule and reschedule the DAs, so as to solve the following problem at each timeslot $t\in\hT$.
\begin{subequations}
\begin{eqnarray}
\textbf{ODS$_t$:}\hspace{-8mm}&&\nonumber\\
\min \hspace{-5mm}&& V(d)=\frac{1}{T\!-\!t\!+\!1}\sum_{\tau=t}^{T}\Big(d(\tau)-\frac{\sum_{s=t}^{T}d(s)}{T\!-\!t\!+\!1}\Big)^{2}\label{eq:objt}\\
\nonumber \text{over}   \hspace{-5mm}&&    p(t:T),~ d(t:T),~ q(t:T)\\
\text{s.t.} \hspace{-5mm}&& d(\tau)=\frac{b_t(\tau)\!+\!q(\tau)\!+\!\sum_{n\in\mathcal{N}_t}p_{n}(\tau)}{N_t}, ~\tau \geq t, \label{eq:con-at} \\
\hspace{-5mm}&&\underline{p}_{n}(\tau) \leq p_{n}(\tau)\leq \overline{p}_{n}(\tau), \tau\geq t, ~n\in\mathcal{N}_t', \label{eq:con-c1t}\\[-1pt]
\hspace{-5mm}&&\sum_{\tau=t}^{T}p_{n}(\tau)=P_{n}(t), ~n\in\mathcal{N}_t',\label{eq:con-c2t}\\[-4pt]
\hspace{-5mm}&&p_{n} \in \mathcal{F}_n(t), ~n\in\tilde{\mathcal{N}}_t'', \label{eq:con-dt} \\ [-2pt]
\hspace{-5mm}&&\sum_{\tau=t}^{T}q(\tau)=\sum_{\tau=t+1}^T P\lambda_{\tau}\label{eq:con-vl},
\end{eqnarray}
\end{subequations}
where $p(t:T)=\{p_n (\tau); t\leq \tau\leq T, n\in \tilde{\hN}_t\}$, $d(t:T)=\{d(\tau); t\leq \tau\leq T\}$, and $P_n(t)=P_n-\sum_{\tau=1}^{t-1}p_n(\tau), n\in\hN_t'$ is the amount of traffic to be served at or after time $t$.

We can solve the ODS$_t$ problem at each timeslot the same way as we solve the offline ODS problem \eqref{eq:ODS}, constituting an online demand shaping algorithm; see Algorithm \ref{alg2}, wherein the convergence { (and computational complexity)} of Step 2) can be established {(and analyzed)} in the same way as Algorithm~\ref{alg1}.

\begin{algorithm}[t]
\caption{Online Demand Shaping (On-DS) Algorithm}
\label{alg2}
At each timeslot $t\in\hT$: 
\begin{enumerate}
\item Denote by $p_n^{(t-1)}, n\in\hN_{t-1}$ the schedules determined by time $t-1$, and by $\hat{\hN}_t''\subseteq \hN_t''$ the set of discrete DAs that has been started before time $t$. For each DA $n\in\hat{\hN}_t''$, set its schedule $p_n(t; T)=\{p_n(\tau); t \leq \tau \leq T\}$ as 
$p_n(\tau)= p_n^{(t-1)}(\tau),~t \leq \tau \leq T$.
\item Solve the ODS$_t$ problem iteratively: at $k$-th iteration,
\begin{enumerate}
\item Upon gathering traffic profiles $p_n^k (t: T)=\{p_n^k (\tau); t\leq \tau \leq T\}$ from DAs $n\in\tilde{\hN}_t$, the coordinator solves \begin{eqnarray}
\nonumber \hspace{-7mm}\min_{q(t+1: T)} \hspace{-3.5mm}&&\hspace{-3.5mm} \sum_{\tau=t+1}^{T}\Big(b_t(\tau)+q(\tau)+\!\sum_{n\in\hat{\mathcal{N}}_t''}p_{n}(\tau)+\sum_{n\in\tilde{\mathcal{N}}_t}p_{n}^k(\tau)\Big)^{2}\\[-4pt]
\nonumber \hspace{-7mm}\text{s.t.}  \hspace{-0mm}&&\hspace{-2mm} \eqref{eq:con-vl},
\end{eqnarray}
to obtain a virtual deferrable traffic $\{q^k (\tau); t+1 \leq \tau \leq T\}$, and then calculates the average traffic 
$d^{k} (\tau) = \frac{1}{N_t}\big(b_t(\tau)+q^k(\tau)+\sum_{n\in\hat{\mathcal{N}}_t''}p_{n}(\tau)+\sum_{n\in\tilde{\mathcal{N}}_t}p_{n}^k(\tau)\big)$ for $\tau\geq t$ 
and announces it to DA $n\in \tilde{\hN}_t$ over a signaling or control channel. 
\item Upon receiving the average traffic profile $d^k$,  
\begin{itemize}
\item DA $n \in \mathcal{N}_t'$ obtains $p_n^{k+1}(t:T)$ by
\begin{eqnarray}
\nonumber \hspace{-0mm} \min_{p_n(t:T)} && \hspace{-5mm}\big\| p_n(t:T)-p_n^k(t:T)+d^k(t:T)\big\|_2^2\\[-4pt]
\nonumber \hspace{-0mm} \text{s.t.} && \hspace{-5mm}  \text{\eqref{eq:con-c1t}--\eqref{eq:con-c2t}},
\end{eqnarray}
and submits the updated profile to the coordinator.
\item DA $n \in \tilde{\mathcal{N}}_t''$ calculates $p_n^{* k+1}(t:T)$ by
\begin{eqnarray}
\nonumber \hspace{-8mm} \min_{p_n(t:T)} && \hspace{-3mm} \Big\| p_n(t:T)- \frac{N_t}{N_t-1}( p_n^{k}(t:T)-d^k (t:T))\Big\|_2^2\\[-4pt]
\nonumber \hspace{-8mm} \text{s.t.}&&\hspace{-3mm} p_n(t:T) \in\text{conv}(\mathcal{F}_n(t)),~n\in\tilde{\mathcal{N}}_t'',
\end{eqnarray}
represents it as a convex combination $p_n^{* k+1}=\sum_{a=1}^{A_n(t)}u_{n,a}^{k+1} f_{n,a}$, and randomly chooses a traffic profile $p_n^{k+1}=f_{n,a}$ with probability $u_{n,a}^{k+1} $ and submits it to the coordinator. 
\end{itemize}
\end{enumerate}
\end{enumerate}
\end{algorithm}
  
\subsection{Performance Analysis of the Online Algorithm}\label{sec:perf}
We now characterize the performance of On-DS algorithm with respect to the result of Off-DS algorithm 
which serves as a benchmark. We will make the following assumptions to simplify the analysis and obtain insights into how uncertainties affect the performance of On-DS algorithm.

\begin{assumption}\label{ass2}
	The amount of deferrable traffic is large {and flexible} enough so that a valley-filling schedule exists at every time $t=1,\ldots,T$, i.e., there exists some constant $C(t)\geq b_t(\tau), \forall\tau=t,\ldots,T$ such that
	\begin{eqnarray}
	&&\hspace{-.3cm}Nd(t)=C(t)\nonumber\\[-4pt]
	&=&\!\!\!\frac{1}{T-t+1}\!\Big(\sum_{\tau=t}^{T}b_t(\tau)+\!\!\sum_{\tau=t+1}^{T}\!\!P\lambda_{\tau}+\sum_{n=1}^{N_t}P_n(t)\!\Big)\!.
	\end{eqnarray}
	\hfill$\Box$
\end{assumption}

\begin{remark}
Assumption~\ref{ass2} looks a strong assumption, and we do not have empirical evidence to support it as demand shaping has not being widely adopted in current cellular networks. However, with increasing penetration of deferrable traffics and users, this assumption expects to hold.  One purpose of algorithm design as in this paper and incentive design as in~\cite{ha2012tube} is to facilitate and incentivize wide adoption of demand shaping. On the other hand, valley-filling represents the scenario where demand shaping is most useful and presents a benchmark for the potential of demand shaping. Mathematically, it is very difficult to analyze the performance of the online algorithm under more general assumption than Assumption~\ref{ass2}. However, notice that in numerical examples in Section~\ref{sec5}, we do not impose Assumption~\ref{ass2} while the results still fall into the bound specified in Theorem~\ref{the:peronline}.\hfill$\Box$
\end{remark}

\begin{assumption}\label{ass3}
	The base traffic prediction at $t$ is modeled as the following causal filter
	\vspace{-3pt}
	\begin{eqnarray}
		b_t(\tau)=\bar{b}(\tau)+\sum_{s=1}^T e(s)f(\tau-s),\ \tau=1,\ldots,T,
	\end{eqnarray}
	where $e=\{e(s)\}_{s=1}^T$ is an uncorrelated sequence of independent and identically distributed random variables with mean 0 and variance $\delta^2$, and $f=\{f(\tau)\}_{\tau=-\infty}^{\infty}$ is the impulse response with $f(0)=1$. Let $F(t):=\sum_{s=0}^{t}f(s)$.\hfill$\Box$
\end{assumption}
 
\begin{figure}
	\begin{center}
		\includegraphics[trim = 0mm 0mm 0mm 0mm, clip, scale=0.27]{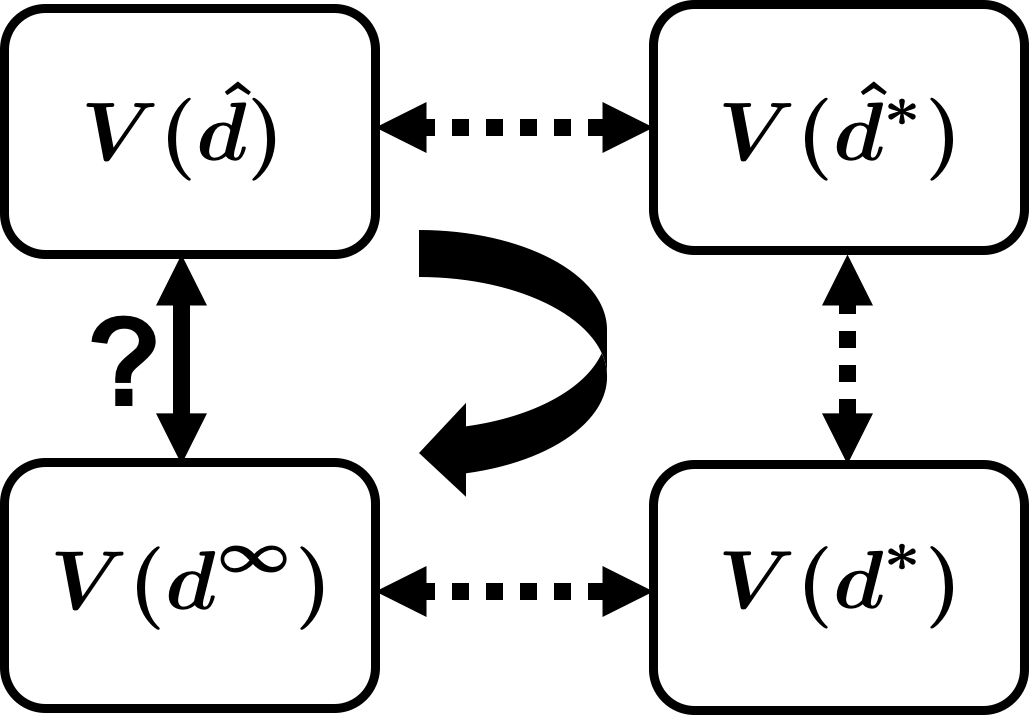}
		\caption{Strategy to calculate the gap between the equilibrium of the On-DS algorithm and that of Off-DS algorithm.} \label{fig:perf1}
	\end{center}
\end{figure}

We denote by $G^{\text{on}}$ the gap defined as the expected difference between the results of On-DS algorithm and Off-DS algorithm, i.e., $G^{\text{on}}  =  E[ V(\hat{d})-V(d^{\infty})]$, where $E$ denotes the expectation, and $\hat{d}$ and ${d}^{\infty}$ denote the average traffic profiles achieved by the On-DS algorithm and the offline-DS algorithm respectively.
It turns out that direct calculation of this gap is difficult. We therefore utilize two intermediate variables: ${d}^*$, the average traffic profile achieved by the R-ODS problem, and $\hat{d}^*$, the average traffic profile achieved by the relaxed online DS, i.e., the counterpart of R-ODS problem in the online scenario. Similar notations are applied to individual traffic profile $p_n$. With the relation shown in Fig.~\ref{fig:perf1}, we can write online gap as
\begin{eqnarray}
G^{\text{on}} \!\!& = &\!\! E\big[ V(\hat{d})\!-\!V(\hat{d}^*)\!+\!V(\hat{d}^*)\!-\!V({d}^*)\!+\!V({d}^*)\!-\!V({d}^{\infty}) \big] \nonumber\\
  & = &\!\! E\big[V(\hat{d})-V(\hat{d}^*)\big] + E\big[ V(\hat{d}^*)-V({d}^*)\big]\nonumber\\
  &&~+ E\big[V({d}^*)-V({d}^{\infty})\big].\label{eq:gap}
\end{eqnarray}

\begin{theorem}\label{the:peronline}
The gap, i.e., the expected difference between the results of On-DS algorithm and Off-DS algorithm is bounded as follows: 
\begin{align}
&G^{\text{on}}=E[ V(\hat{d})-V({d}^{\infty}) ]\leq\frac{2}{TN^2}\sum_{n\in\hN''}\!\|\hat{p}_n\|_2^2\nonumber\\[-5pt]
&~~~~~~~+\frac{(\delta\lambda)^2}{T}\sum_{t=2}^{T}\frac{1}{t}+\frac{\delta^2}{T^2}\sum_{t=0}^{T-1}F^2(t)\frac{T-t-1}{t+1}.\label{eq:peronline}
\end{align}
\hfill$\Box$
\end{theorem}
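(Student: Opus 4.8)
\noindent\emph{Proof plan.} The plan is to bound each of the three expectations in the decomposition \eqref{eq:gap} separately and then add them. The third one, $E[V(d^{*})-V(d^{\infty})]$, is free: by Theorem~\ref{thm:thm2} the equilibrium profile $p^{\infty}$ of the Off-DS algorithm assigns a single feasible profile to every discrete DA, so $d^{\infty}$ is feasible for the R-ODS problem, and since $d^{*}$ is R-ODS-optimal we get $V(d^{*})\le V(d^{\infty})$, i.e. $E[V(d^{*})-V(d^{\infty})]\le 0$, which contributes nothing to the bound.

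For the first expectation I would mirror the proof of Theorem~\ref{the:peroffline}. Using $V(\hat d)-V(\hat d^{*})=\tfrac1T(\|\hat d\|_2^2-\|\hat d^{*}\|_2^2)\le\tfrac2T\langle \hat d,\hat d-\hat d^{*}\rangle$ and writing $N\hat d=b+\sum_{n\in\hN}\hat p_n$, $N\hat d^{*}=b+\sum_{n\in\hN}\hat p_n^{*}$, the contributions of the continuous DAs vanish by the first-order optimality conditions satisfied at the equilibrium of each ODS$_t$ subproblem (the online counterparts of \eqref{eq:optconw34}), so only the discrete-DA terms $\langle \hat p_n,\hat p_n-\hat p_n^{*}\rangle$ survive, and each of these is at most $\|\hat p_n\|_2^2$ by nonnegativity of traffic profiles. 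This yields $E[V(\hat d)-V(\hat d^{*})]\le\tfrac{2}{TN^{2}}\sum_{n\in\hN''}\|\hat p_n\|_2^2$, the first term in \eqref{eq:peronline}.

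The middle expectation $E[V(\hat d^{*})-V(d^{*})]$ is the heart of the argument. Under Assumption~\ref{ass2}, solving the relaxed ODS$_t$ at timeslot $t$ flattens the remaining horizon to a single level $C(t)/N_t$, so the realized relaxed-online trajectory $\hat d^{*}$ can be written as $d^{*}$ plus a telescoping sum of one-step corrections incurred at each transition $t\to t+1$: one caused by replacing the virtual traffic estimate $\sum_{\tau>t}P\lambda_{\tau}$ with the freshly observed arrivals $m_{t+1}$ (mean $\lambda_{t+1}$, variance $(\delta\lambda)^2$), and one caused by updating the base-traffic prediction from $b_t$ to $b_{t+1}$, which under Assumption~\ref{ass3} differs by the innovation $e(t+1)f(\cdot-t-1)$ and hence shifts the cumulative valley-filling target by an amount governed by the partial sums $F(\cdot)$ of the impulse response. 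I would show that these successive corrections are mutually uncorrelated (the arrival noise and the i.i.d. innovation sequence $e$ are independent and zero-mean), so that $E[V(\hat d^{*})-V(d^{*})]=\tfrac1T E\|\hat d^{*}-d^{*}\|_2^2$ decomposes into a sum of per-step variances; substituting $(\delta\lambda)^2$ and $\delta^{2}$ and counting over how many of the remaining slots each correction is spread --- an arrival error revealed at a given step is shared by all remaining slots (giving the $\tfrac1t$ pattern), while a base-traffic innovation of cumulative gain $F(t)$ affects a $T-t-1$ fraction of a horizon of length $t+1$ (giving the $\tfrac{T-t-1}{t+1}$ weight) --- reproduces exactly the last two terms of \eqref{eq:peronline}.

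The main obstacle is this middle term: one must track how each arrival-rate and base-traffic error, first observed at a particular point of the shrinking horizon, propagates through all subsequent re-optimizations of the valley-filling target, verify that the induced corrections are uncorrelated so that their variances simply add, and then carry out the combinatorial bookkeeping of the weights $\tfrac1t$ and $\tfrac{T-t-1}{t+1}$. Assumption~\ref{ass2} is precisely what makes the per-timeslot optimum explicit --- a single flat level --- and thereby reduces the analysis to this accounting; without it the one-step corrections would not admit a closed form.
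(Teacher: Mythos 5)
Your decomposition and the bounds for the outer two terms match the paper's proof exactly: the paper likewise reuses the machinery of Theorem~\ref{the:peroffline} to get $0\le E[V(\hat d)-V(\hat d^*)]\le\frac{2}{TN^2}\sum_{n\in\hN''}\|\hat p_n\|_2^2$ and the feasibility/optimality sign argument to get $E[V(d^*)-V(d^\infty)]\le 0$. For the middle term the paper does not rederive anything --- it simply invokes the closed-form identity \eqref{eq:midterm} from \cite{gan2013real} under Assumptions~\ref{ass2}--\ref{ass3} --- so your telescoping-variance sketch is consistent with, and somewhat more detailed than, what the paper actually writes.
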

\begin{proof}
Applying the approach and results from Theorem~\ref{the:peroffline}, we have 
\begin{eqnarray}
0\leq E\big[V(\hat{d})-V(\hat{d}^*)\big]\leq \frac{2}{TN^2}\sum_{n\in\hN''}\!\|\hat{p}_n\|_2^2,\label{eq:firstterm}\\[-5pt]
-\frac{2}{TN^2}\sum_{n\in\hN''}\!\|\hat{p}_n\|_2^2\leq E\big[V({d}^*)-V({d}^{\infty})\big]\leq 0.
\end{eqnarray}
For the second term of (\ref{eq:gap}), under Assumptions~\ref{ass2}--\ref{ass3}, following \cite{gan2013real}, we get
\begin{eqnarray}
E\big[ V(\hat{d}^*)\!-\!V({d}^*)\big]=\frac{(\delta\lambda)^2}{T}\sum_{t=2}^{T}\frac{1}{t}+\frac{\delta^2}{T^2}\sum_{t=0}^{T-1}F^2(t)\frac{T-t-1}{t+1}.\!\!\!\!\!\!\!\!\!\!\!\!\!\!\!\!\!\!\nonumber\\[-5pt]\label{eq:midterm}
\end{eqnarray}
Combine (\ref{eq:firstterm})--(\ref{eq:midterm}) to obtain (\ref{eq:peronline}).
\end{proof}

Theorem~\ref{the:peronline} indicates that, the size of the gap between online and offline algorithms changes monotonically with prediction error of both base traffic and future arrival of deferrable traffic. Accordingly we can improve the result of On-DS algorithm by implementing better prediction mechanism, e.g., On-DS algorithm which updates its prediction to keep the value of prediction error small.
Also, if the impulse response $f$ is chosen to fade quickly enough, then as we have finer time granularity, we have $T\rightarrow\infty$, and $G^{\text{on}}\rightarrow 0$, which intuitively indicates that, with infinitely small timeslot, we can update our decisions frequently enough to mitigate prediction errors, and therefore have a negligible performance gap.
 
Lastly, similar to Theorem~\ref{the:peroffline}, define a relative gap $G_r^{\text{on}}:=G^{\text{on}}/V(d^{\infty})$. The following result is immediate. 
\begin{theorem}\label{the:onsub}
The relative gap $G_r^{\text{on}}$ diminishes as the number of discrete DAs $N''$ increases, i.e., 
\begin{eqnarray}
\lim_{N''\rightarrow\infty} G_r^{\text{on}}=0.\label{eq:ondimishsub}
\end{eqnarray}
 \hfill$\Box$
\end{theorem}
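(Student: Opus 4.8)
The plan is to mirror the proof of Theorem~\ref{the:peroffline}, now applied to the online gap. Starting from the bound \eqref{eq:peronline} of Theorem~\ref{the:peronline}, I would divide through by $V(d^{\infty})$ and write
\[
G_r^{\text{on}} = \frac{G^{\text{on}}}{V(d^{\infty})} \le \frac{2}{TN^2 V(d^{\infty})}\sum_{n\in\hN''}\|\hat{p}_n\|_2^2 + \frac{C_0}{V(d^{\infty})},
\]
where $C_0 := \frac{(\delta\lambda)^2}{T}\sum_{t=2}^{T}\frac{1}{t} + \frac{\delta^2}{T^2}\sum_{t=0}^{T-1}F^2(t)\frac{T-t-1}{t+1}$ collects the two prediction-error terms of \eqref{eq:peronline} and, crucially, does not depend on $N''$. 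The goal is then to show that each summand on the right vanishes as $N''\to\infty$.

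The first step is to dispatch the discrete-DA term exactly as in Theorem~\ref{the:peroffline}. Each $\|\hat{p}_n\|_2^2$ with $n\in\hN''$ is a fixed constant (a constant-rate block of height $r_n$ over $l_n$ slots), so $\sum_{n\in\hN''}\|\hat{p}_n\|_2^2$ grows only linearly in $N''$; meanwhile $N=N'+N''\to\infty$, so the prefactor $2/(TN^2)$ is $\Theta(1/N^2)$. Since the Off-DS equilibrium $p^{\infty}$ is feasible for R-ODS, we have $V(d^{\infty})\ge V(d^*)$, which stays bounded away from $0$ under the same traffic-scaling accounting used for \eqref{eq:peroff} (equivalently, writing $TN^2V(d^{\infty}) = \|b+\sum_n p_n^{\infty}\|_2^2 - TN^2 c_d^2$, the aggregate-traffic norm grows quadratically in $N''$ while the numerator grows only linearly). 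Hence this term is $O(N''/N^2)=O(1/N)\to 0$.

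The remaining and, I expect, genuinely delicate step is controlling $C_0/V(d^{\infty})$: unlike the offline case, where the entire numerator of $G_r^{\text{off}}$ scaled like $N''$ against an $N''^2$-sized denominator, here one has a piece that is constant in $N''$. The route I would take is to exhibit a growth rate for $V(d^{\infty})$ — or, again, for $TN^2 V(d^{\infty}) = \|b+\sum_n p_n^{\infty}\|_2^2 - TN^2 c_d^2$ — that dominates the fixed $C_0$ as the number (and total traffic) of discrete DAs increases, using the same ``aggregate traffic grows with the number of DAs'' bookkeeping that underlies \eqref{eq:peroff}; then $C_0/V(d^{\infty})\to 0$ and, combining with the first step, $G_r^{\text{on}}\to 0$ term by term. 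The main obstacle is precisely pinning down this growth behaviour: one must be explicit about how the deferrable traffic (hence $V(d^{\infty})$) scales with $N''$ relative to the prediction-error parameters $\delta\lambda$ and $\delta$ — and, if those parameters are instead held fixed while $V(d^{\infty})$ does not grow, the statement should be read in the regime where finer prediction drives $C_0$ down alongside $N''\to\infty$, as in the remark following Theorem~\ref{the:peronline}. Making that dominance quantitative is the crux of the argument; everything else is the routine counting already carried out in Theorem~\ref{the:peroffline}.
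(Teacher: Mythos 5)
Your handling of the first term is exactly the argument the paper intends (it offers no proof beyond ``immediate''): $\sum_{n\in\hN''}\|\hat p_n\|_2^2$ grows linearly in $N''$ while $TN^2V(d^{\infty})$ grows quadratically, so that ratio vanishes just as in Theorem~\ref{the:peroffline}. You have also put your finger on the genuine crux --- the prediction-error constant $C_0$ in \eqref{eq:peronline} has no visible $N''$-dependence, so $C_0/V(d^{\infty})$ does not obviously vanish --- but you leave that step unresolved, and the route you sketch for it cannot work as stated. Since $d$ is the \emph{per-DA-normalized} profile $d=(b+q+\sum_n p_n)/N_t$, the quantity $V(d^{\infty})$ stays $O(1)$ as $N''\to\infty$ (indeed it is bounded above by the variance of a bounded profile and, under the valley-filling regime, may even shrink); it does not grow to dominate a fixed $C_0$. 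Equivalently, in your own rescaled form you would need $TN^2C_0=o(N''^2)$, i.e.\ $C_0=o(1)$, which is false if $\delta$ and $\delta\lambda$ are held fixed and \eqref{eq:midterm} is read literally.

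The missing ingredient is that the prediction errors enter $V$ only through the normalized profile: the base-traffic error $\sum_s e(s)f(\tau-s)$ and the arrival-count error (which contributes $P^2(\delta\lambda)^2$ to the variance of the future deferrable load) both appear in $d(\tau)$ divided by $N_t$, so the second term of \eqref{eq:gap} in fact carries a factor $1/N^2$ that the statement of \eqref{eq:midterm} suppresses. Once that normalization is restored, $C_0=O(1/N^2)$ and $TN^2C_0$ is $O(1)$ against a denominator $TN^2V(d^{\infty})$ growing like $N''^2$, and the limit \eqref{eq:ondimishsub} follows by the same counting as \eqref{eq:peroff}. Without making that scaling explicit (or, as you note, without letting the prediction quality improve alongside $N''$), the claim does not follow from \eqref{eq:peronline} for fixed $\delta,\delta\lambda$; so the gap you flagged is real and needs this normalization argument, not a growth argument for $V(d^{\infty})$, to close it.
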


\begin{remark}
It is worth noting that equation (\ref{eq:ondimishsub}) does not necessarily imply a monotone decreasing of $G^{\text{on}}_r$ with respect to $N''$. This can be seen from Fig.~\ref{fig3_2} in Section~\ref{sec5} that does not show a decreasing $G^{\text{on}}_r$ as $N''$ increases.\hfill$\Box$
\end{remark}

By equations \eqref{eq:firstterm} and \eqref{eq:midterm}, it is straightforward to obtain  the following result. 
\begin{corollary}
The expected difference between the On-DS algorithm and the optimum of the R-ODS problem is bounded as follows:
\begin{align}
&E[ V(\hat{d})-V({d}^{*}) ]\leq\frac{2}{TN^2}\sum_{n\in\hN''}\!\|\hat{p}_n\|_2^2\nonumber\\[-5pt]
&~~~~~~~+\frac{(\delta\lambda)^2}{T}\sum_{t=2}^{T}\frac{1}{t}+\frac{\delta^2}{T^2}\sum_{t=0}^{T-1}F^2(t)\frac{T-t-1}{t+1}.\nonumber
\end{align} 
\hfill$\Box$
\end{corollary}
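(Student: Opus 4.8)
The plan is to obtain the bound purely by assembling two facts already established within the proof of Theorem~\ref{the:peronline}, with no new estimation required. The key observation is that the target quantity telescopes through the relaxed online profile $\hat{d}^*$:
\begin{align}
E[V(\hat{d})-V({d}^{*})] = E\big[V(\hat{d})-V(\hat{d}^*)\big] + E\big[V(\hat{d}^*)-V({d}^{*})\big].\nonumber
\end{align}
I would recognize the right-hand side as exactly the first two of the three terms in the decomposition \eqref{eq:gap} of $G^{\text{on}}$; the corollary therefore amounts to retaining those two terms and discarding the third term $E[V({d}^{*})-V({d}^{\infty})]$, which is the only place the offline equilibrium $d^{\infty}$ entered.

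First I would bound the first term by its upper estimate \eqref{eq:firstterm}, namely $E[V(\hat{d})-V(\hat{d}^*)]\leq \frac{2}{TN^2}\sum_{n\in\hN''}\|\hat{p}_n\|_2^2$, which was derived by transporting the argument of Theorem~\ref{the:peroffline} to the online setting. Next I would substitute the exact identity \eqref{eq:midterm} for the second term, which under Assumptions~\ref{ass2}--\ref{ass3} and following \cite{gan2013real} equals $\frac{(\delta\lambda)^2}{T}\sum_{t=2}^{T}\frac{1}{t}+\frac{\delta^2}{T^2}\sum_{t=0}^{T-1}F^2(t)\frac{T-t-1}{t+1}$. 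Since the first term is nonnegative and bounded above while the second is given by an equality, their sum is a legitimate upper bound, and adding them reproduces exactly the claimed right-hand side. The only bookkeeping to check is that linearity of expectation splits $E[V(\hat{d})-V({d}^{*})]$ cleanly into these two pieces and that the sign conventions of \eqref{eq:firstterm} and \eqref{eq:midterm} align in the direction of the asserted inequality.

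There is no genuine obstacle at this assembly stage; all the analytic difficulty lives in the two ingredients \eqref{eq:firstterm} and \eqref{eq:midterm}, both of which are available. It is worth flagging, as a consistency check rather than an added step, that the discarded term satisfies $E[V({d}^{*})-V({d}^{\infty})]\leq 0$, because the relaxed offline optimum $d^*$ attains an objective value no larger than the Off-DS equilibrium $d^{\infty}$. Consequently $E[V(\hat{d})-V({d}^{*})]\geq G^{\text{on}}$, so the corollary bounds a \emph{larger} quantity than Theorem~\ref{the:peronline} yet by the \emph{same} right-hand side. This is coherent precisely because the proof of Theorem~\ref{the:peronline} used only the nonpositivity of that third term and never its magnitude, so dropping it entirely costs nothing in the bound.
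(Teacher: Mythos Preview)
Your proposal is correct and matches the paper's own argument exactly: the paper simply states that the result follows directly from equations \eqref{eq:firstterm} and \eqref{eq:midterm}, which is precisely the telescoping through $\hat{d}^*$ that you describe. Your additional consistency remark about the nonpositivity of the discarded term is accurate but goes beyond what the paper records.
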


\section{Numerical Examples}\label{sec5}

In this section, we  provide numerical examples to evaluate the performance of the On-DS algorithm. We use certain composite traffic traces to drive  simulations to show the impact of base traffic prediction errors, deferrable traffic prediction errors, and deferrable traffic penetration levels. We expect the conclusions obtained to hold for real traffic.  

\subsection{Experimental Setup}\label{secVA}
Consider a 48-hour period of time starting from 4:00 pm to 3:59 pm two days later. We divide the 48 hours equally into 96 timeslots, each 30 minutes long. We consider scheduling traffic that arrives within the first 24 hours only, which may be allocated to the second 24 hours.

\subsubsection{Non-deferrable Traffic}\label{sec5A1}
The ``real" trace we use for non-deferrable traffic, or base traffic, is shown in Fig.~\ref{fig4_1} (red line). It is constructed by random fluctuation around the average base traffic trace (blue line) composed based on North American mobile web browsing activity by time of day in 2013 \cite{TRAFF}, shown in Fig.~\ref{fig1}. As modeled in Section~\ref{sec2A}, the prediction of base traffic follows (\ref{eq:ndp}), consisting of average base traffic $\bar{b}(\tau)$ and random deviation $\delta b_t(\tau)$ from the average value.  Following \cite{gan2013real}, at time $t$, $\delta b_t(\tau)$ is modeled as
\begin{eqnarray}
\delta b_t(\tau)=\sum_{s=t+1}^{\tau}\omega_s(\tau), ~t < \tau \leq T,
\end{eqnarray}
where $\omega_s(\tau)$ are random variables of Gaussian distribution with 0 mean and variances  
\begin{eqnarray}
E[\omega_s^2(\tau)]=\frac{\sigma^2}{\tau-s+1},~1\leq s\leq \tau\leq T.\label{basede}
\end{eqnarray}
In this way, $\delta b_t(\tau)$ has decreasing variance as $t$ approaches $\tau$, simulating a gradually improving prediction for some future timeslot $\tau$ as one gets closer to it. In simulation, we take the values of $\sigma^2$ in (\ref{basede}) from $0$ to $100$ with increment of 10, corresponding to a root-mean-square prediction error (RMSE) ranging from $0\%$ to $32\%$, looking 48 timeslots (24 hours) ahead.

\begin{figure}
	\centering
	\includegraphics[trim = 33mm 80mm 30mm 80mm, clip, scale=0.61]{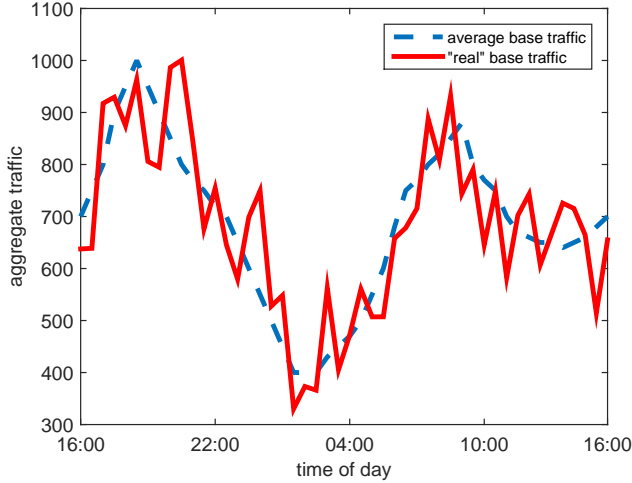}
	\caption{Base traffic: the average (blue/dotted) and a ``real" trace (red/solid).}
	\label{fig4_1}
\end{figure}

\begin{figure}
\centering
	\includegraphics[trim = 30mm 80mm 30mm 80mm, clip, scale=0.58]{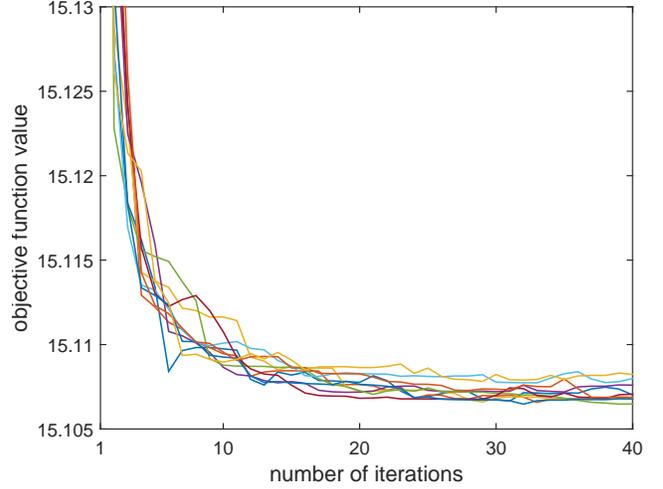}
	\caption{Repetitive experiments show that a number of 20 to 30 iterations give a satisfying result in terms of convergence.}
			\label{fig4_2}
\end{figure}	  

\subsubsection{Deferrable Traffic}\label{sec5A2}

We assume that the number of DAs arriving at each timeslot follows a ``shifted''  Poisson process $m+\text{poissrnd}(\lambda_p)$, with $m\geq0$ and $\text{poissrnd}(\lambda_p)$ denoting a Poisson process with rate $\lambda_p$. Here, we set $\lambda_p=4$, while each DA has a 50-50 chance to require continuous- or discrete-rate traffic. The total traffic $P_n$ of each DA is uniformly distributed in $[\underline{P},\overline{P}]$ where we set $\underline{P}=12$ and $\overline{P}=24$.
The deadline for DA $n$ is uniformly distributed in $[ t_n^a + l_n +\underline{D}, t_n^a + l_n +\overline{D} ]$, where $l_n = \ulcorner P_n/\overline{p}_n\urcorner$ is the minimum number of timeslots required by the DA calculated by ceiling function $ \ulcorner\cdot\urcorner$. We set $\underline{D}=6$, $\overline{D}=14$, and a universal bit rate upper bound $\overline{p}_n=3$. 

\subsubsection{Benchmarks for Comparison}\label{sec5A3}
We compare the performance of the On-DS algorithm with a few typical benchmarks to evaluate the impact of base traffic prediction error, the benefit of updating the prediction in real time, and the impact of deferrable traffic's penetration level. We thus consider the followings five cases in our experiments:
\begin{enumerate}
  \item[(0)]\label{enumerate1} {\em Offline demand shaping w/ Off-DS algorithm.} We use ``real" trace for future base traffic and use arrival information recorded from case (1) below for DAs. Applied with Off-DS algorithm, this case gives the optimal performance used as benchmark to characterize the gap of other cases.
  \item[(1)]\label{enumerate2} {\em Online demand shaping w/ On-DS algorithm.} We make prediction for both DAs' arrival and base traffic in the future. Prediction is updated at each timeslot. We run On-DS algorithm to schedule traffic.
  \item[(2)]\label{enumerate3} {\em Online demand shaping w/ exact information for base traffic and w/o exact information for DAs.} We use ``real" trace for base traffic and prediction for DAs. We apply On-DS algorithm. Comparison of case (2) with case (1) shows the impact of uncertainty in base traffic.
  \item[(3)]\label{enumerate4} {\em Demand shaping w/ updating prediction of base traffic and w/ exact information for DAs.} We use DAs arrival information recorded from case (1). Instead of applying virtual deferrable traffic, we schedule traffic profiles for all the future deferrable traffic. Since the exact base traffic information is not available, we updated base traffic prediction at each timeslot. Comparison of case (3) and (1) shows the impact of uncertainty in DAs arrival prediction.
  \item[(4)]\label{enumerate5} {\em Demand shaping w/o updating prediction of base traffic and w/ exact information for DAs.} We use prediction of the base traffic at the beginning ($t=1$) without further updating, and use arrival information recorded from case (1) for DAs. This case shows how the online algorithm benefits from updating prediction at each timeslot.
  
\end{enumerate}

We use the metric of relative gap $G_r(d)=(V(d)-V(d^{0}))/V(d^{0})$ to evaluate the performance, where $d^{0}$ is the results obtained from case (0). Also notice that when $d$ is calculated based on case (1), $G_r(d)$ becomes $G_r^{\text{on}}$ in Theorem~\ref{the:onsub}.

\subsection{Experiment Results}\label{sec5B}
Considering randomness in DAs' arrivals, base traffic prediction, and deciding traffic profiles for discrete DAs, we run simulation for 10 times, and take the average as the final result to present.

\subsubsection{Convergence Speed}We first run a case of randomly generated 143 continuous DAs and 150 discrete DAs by Off-DS algorithm with different numbers of iterations ranging from 1 to 40 for 10 times. Because of the random process in choosing traffic profiles for discrete DAs, we observe oscillation in objective function values for each individual run. However, the oscillation has a trend of diminishing as the more iterations are implemented, with satisfying enough results generated from running 20 to 30 iterations. See Fig.\ref{fig4_2} for the results. We will implement a number of 30 iterations to make each decision for the rest of simulation.

\subsubsection{Impact of Base Traffic Prediction Error}

As described in Section~\ref{secVA}, we can tune the variance $\sigma^2$ to emulate situations with different prediction errors in base traffic. As Fig.~\ref{fig3_1} shows, with updated prediction, case (1)'s performance is barely affected by the increasing prediction error,  keeping its relative gap under 5\%. This is almost as good as that of case (2) with perfect base traffic information. 
We can also see from the performance of case (3) the pure impact from prediction errors, while case (4) gives an example showing what happens if there is no updated prediction.

\subsubsection{Impact of Penetration Level of Discrete DAs}
 In this case, we fix the prediction error in base traffic at $\sigma^2=40$ and the average number of DAs' arrival at each timeslot at $\lambda_p=4$. We then tune the penetration level of discrete DAs from $25\%$ to $75\%$ with granularity of $5\%$. As shown in Fig.~\ref{fig3_2}, the relative gap maintains relatively unaffected by the changes of discrete DAs whose penetration has increased by three times. Here, we do \emph{not} observe a decreasing relative gap mainly because the gap is not monotonically decreasing with number of $N''$. 
 \begin{figure}
	\centering
		\includegraphics[trim = 0mm 0mm 0mm 0mm, clip, scale=0.58]{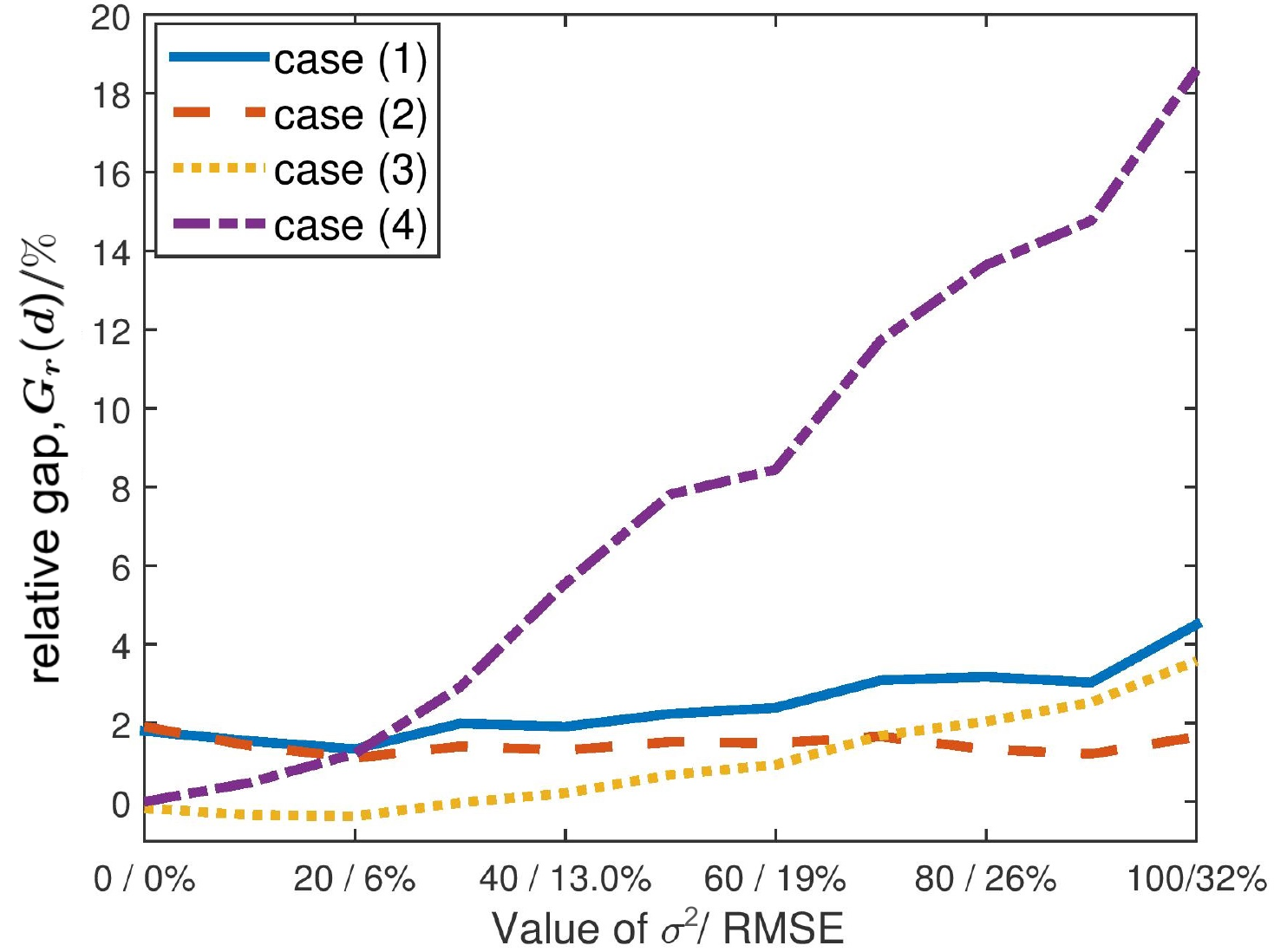}
		\caption{Base traffic prediction error has little impact on online algorithms with updated base traffic prediction.}
		\label{fig3_1}
\end{figure}
\vspace{10mm}
\begin{figure}
		\centering
		\includegraphics[trim = 0mm 0mm 0mm 0mm, clip, scale=0.58]{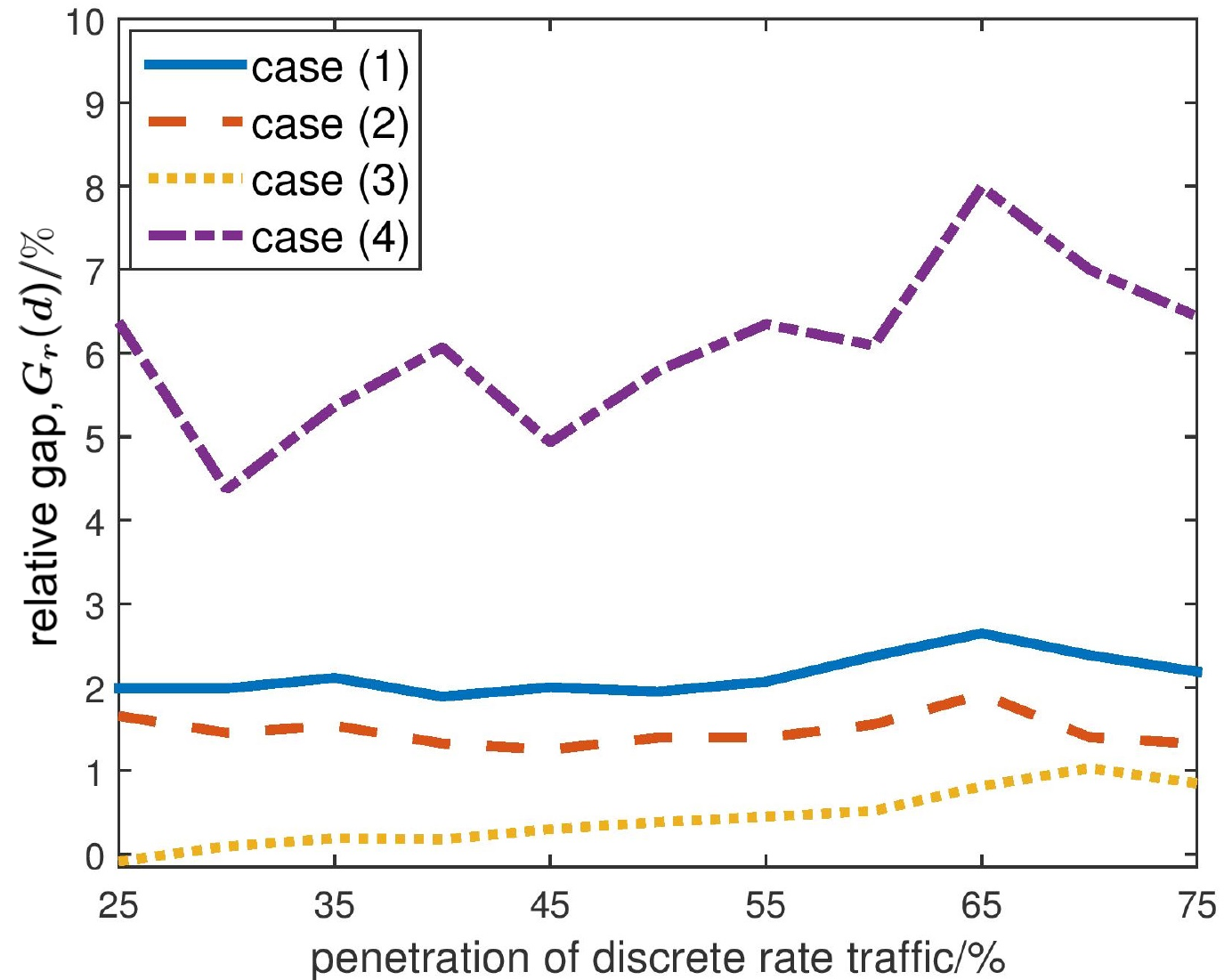}
		\caption{Increasing penetration level of deferrable traffic does not influence the relative gap of online algorithms. }
		\label{fig3_2}
\end{figure}	    

\section{Conclusion}\label{sec6}
We have formulated demand shaping in cellular networks as an optimization problem that minimizes the time variation in aggregate traffic subject to the rate and time requirements of the applications. We design a distributed and randomized offline demand shaping algorithm under complete traffic information and prove its almost surely convergence. We then consider a realistic setting with incomplete information where we can only predict future traffic to a certain degree of accuracy, and design an online demand shaping algorithm that updates the schedules of deferrable applications each time new information is available,  based on solving at each timeslot an optimization problem over a shrinking horizon from the current time to the end of the day. We compare the performance of the online algorithm against   the optimal offline algorithm analytically and numerically. As future work, we are investigating to integrate the incentive mechanisms such as the smart data pricing into the demand shaping algorithm design. We also plan to develop a platform to enable automatic demand shaping in cellular networks and investigate the related practical issues. 

\section*{Acknowledgement}

We would like to thank Seungil You for help with simulations and Lingwen Gan for careful comments.

\bibliographystyle{plain}
\bibliography{ref,ChenReferences,ChenReferencesPower,xinyang_ds}

\end{document}